\renewenvironment{proof}[1][\proofname]{\par
  \pushQED{\qed}%
  \normalfont \topsep0\p@\relax
  \trivlist
  \item[\hskip\labelsep\scshape
  #1\@addpunct{.}]\ignorespaces
}{%
  \popQED\endtrivlist\@endpefalse
}
\numberwithin{equation}{section} 
\newtheorem{thm}{Theorem}[section]
\newtheorem{lem}[thm]{Lemma}
\theoremstyle{definition}
\newtheorem{df}[thm]{Definition}
\newtheorem{remark}[thm]{Remark}
\newtheorem*{prth1.1}{Proof of Theorem 1.1}
\newcommand{\Du}{D_u}
\newcommand{\Dv}{D_v}
\newcommand{\Dw}{D_w}
\newcommand{\Dz}{D_z}
\newcommand{\chiu}{\chi_u}
\newcommand{\chiz}{\chi_z}
\newcommand{\betau}{\beta_u}
\newcommand{\gammau}{\gamma_u}
\newcommand{\gammav}{\gamma_v}
\newcommand{\gammaw}{\gamma_w}
\newcommand{\deltau}{\delta_u}
\newcommand{\deltaz}{\delta_z}
\newcommand{\muv}{\mu_v}
\newcommand{\muw}{\mu_w}
\newcommand{\alphaw}{\alpha_w}
\newcommand{\alphaz}{\alpha_z}
\newcommand{\betaz}{\beta_z}
\newcommand{\rhov}{\rho_v}
\newcommand{\Sigmaset}{\sum} 
\newcommand{\tmax}{T_{\mathrm{max}}}
\newcommand{\tmaxe}{T_{\max, \eps}}
\newcommand{\lp}[2]{\|#2\|_{L^{#1}(\Omega)}}
\newcommand{\Ombar}{\overline{\Omega}}
\newcommand{\Ombarinf}{\Ombar \times [0, \infty)}
\newcommand{\loc}{\mathrm{loc}}
\newcommand{\embed}{\hookrightarrow}
\newcommand{\intnt}{\int_0^T}
\newcommand{\intntom}{\int_0^T \!\! \int_\Omega}
\newcommand{\intninfom}{\int_0^\infty \!\! \int_\Omega}
\newcommand{\leb}[2][\Omega]{\ensuremath{L^{#2}(#1)}}
\newcommand{\lebl}[1][\Omega]{\ensuremath{L\log L(#1)}}
\newcommand{\sob}[3][\Omega]{\ensuremath{W^{#2, #3}(#1)}}
\newcommand{\con}[2][\Ombar]{\ensuremath{C^{#2}(#1)}}
\newcommand{\io}{\int_\Omega}
\newcommand{\iio}{\int_0^T\!\! \io}
\newcommand{\ol}{\overline}
\newcommand{\ds}{\,\mathrm{d}s}
\newcommand{\dtau}{\,\mathrm{d}\tau}
\newcommand{\dsigma}{\,\mathrm{d}\sigma}
\newcommand{\hp}{\hphantom}
\newcommand{\pe}{\mathrel{\hp{=}}}
\newcommand{\R}{\mathbb{R}}
\newcommand{\N}{\mathbb{N}}
\newcommand{\ur}[1]{\mathrm{#1}}
\newcommand{\ure}{\ur e}
\newcommand{\ddt}{\frac{\mathrm{d}}{\mathrm{d}t}}
\newcommand{\eps}{\varepsilon}
\newcommand{\intom}{\int_\Omega}
\newcommand{\defs}{\coloneqq}
\newcommand{\nea}{\nearrow}
\newcommand{\sea}{\searrow}
\newcommand{\rh}{\rightharpoonup}
\newcommand{\ue}{u_\eps}
\newcommand{\uet}{u_{\eps t}}
\newcommand{\une}{u_{0 \eps}}
\newcommand{\ve}{v_\eps}
\newcommand{\vet}{v_{\eps t}}
\newcommand{\vne}{v_{0 \eps}}
\newcommand{\we}{w_\eps}
\newcommand{\wet}{w_{\eps t}}
\newcommand{\wne}{w_{0 \eps}}
\newcommand{\ze}{z_\eps}
\newcommand{\zet}{z_{\eps t}}
\newcommand{\zne}{z_{0 \eps}}
\newcommand{\sigmae}{\sigma_\eps}
\newcommand{\f}[2]{\frac{#1}{#2}}
\author[1]{Mario~Fuest\footnote{e-mail: fuest@ifam.uni-hannover.de, ORCID: 0000-0002-8471-4451}}
\author[1]{Johannes~Lankeit\footnote{e-mail: lankeit@ifam.uni-hannover.de, ORCID: 0000-0002-2563-7759}}
\author[2]{Masaaki~Mizukami\footnote{e-mail: masaaki.mizukami.math@gmail.com, ORCID: 0000-0002-5496-5129}}
\affil[1]{Leibniz Universität Hannover, Institut für Angewandte Mathematik, \protect\\ Welfengarten 1, 30167 Hannover, Germany}
\affil[2]{Department of Mathematics, Faculty of Education, 
Kyoto University of Education, \protect\\ 1, Fujinomori, Fukakusa, Fushimi-ku, Kyoto 612-8522, Japan} 
\date{}
\title{Global solvability of a model for tuberculosis granuloma formation}
\begin{document}
\setkomafont{title}{\normalfont\Large}
\maketitle

\KOMAoptions{abstract=true}
\begin{abstract}
\noindent
We discuss a nonlinear system of partial differential equations modelling the formation of granuloma during tuberculosis infections and prove the global solvability of the homogeneous Neumann problem for 
\begin{align*}
  \begin{cases}
    u_t = D_u \Delta u - \chi_u \nabla \cdot (u \nabla v) - \gamma_u uv - \delta_u u + \beta_u, \\
    v_t = D_v \Delta v + \rho_v v - \gamma_v uv + \mu_v w,\\
    w_t = D_w \Delta w + \gamma_w uv - \alpha_w wz - \mu_w w,\\
    z_t = D_z \Delta z - \chi_z \nabla \cdot (z \nabla w) + \alpha_z f(w)z - \delta_z z
  \end{cases}
\end{align*}
in bounded domains in the classical and weak sense in the two- and three-dimensional setting, respectively.\\[2pt]
In order to derive suitable a~priori estimates, we study the evolution of the well-known energy functional for the chemotaxis--consumption system both for the $(u, v)$- and the $(z, w)$-subsystem.
A key challenge compared to “pure” consumption systems consists of overcoming the difficulties raised by the additional, in part positive, terms in the second and third equations.
This is inter alia achieved by utilising a dissipative term of the (quasi-)energy functional, which may just be discarded in simpler consumption systems.
\\[5pt]
 \textbf{Key words:} {tuberculosis, chemotaxis-consumption, global existence} \\
 \textbf{Mathematics Subject Classification (MSC 2020):} 35K55 (primary), 35A01, 35A09, 35D30, 35Q92, 92C17, 92C50 (secondary)
\end{abstract}

\section{Introduction}
Tuberculosis (TB) is an infectious disease that has plagued humankind since ancient times \cite{ancient_tuberculosis} and even nowadays is thought to be present as latent infection in almost a quarter of the world's population \cite{houben_dodd}. 
Even though it is preventable and usually curable, in 2022 it was the second leading cause of death from a single infectious agent (after COVID-19), \cite{who_tuberculosis_rept2023}. 
Mathematical modeling of tuberculosis has largely concentrated on an epidemiological, population level perspective%
, see \cite{waaler,castillo-chavez_song,feng2000model,feng_ianelli_milner,ozacglar,zhang_liu_feng_jin}. 
The reaction of the immune system within a single host was considered in, e.g., \cite{antia1996models,magombedze,ibarguen_esteva_chavez,ibarguen_esteva_burbano} or in the context of co-infection of HIV and tuberculosis in an ODE model for T cells, macrophages and pathogens in \cite{kirschner1999dynamics}; further related co-infection studies and an overview of their results can be found in \cite[table 1, p.12]{hoerter}.

In the present article, we focus on an even more detailed level of description (based on the model from \cite{feng}), in order to capture spatially inhomogeneous phenomena, like the formation of so-called granuloma, one of the distinguishing features of tuberculosis infections. (For an overview of different types of within-host models also see the survey \cite{chakraborty2024brief}.)

When previously aerosol-borne \textit{Mycobacterium tuberculosis} (\textit{M.tb.}) enters the body of a host, primary TB develops in the alveoli of the lung (see e.g.\ \cite{wigginton_kirschner,ehlers_schaible,gammack_ganguli_marino_kirschner} for a more detailed description
). Alveolar macrophages approach 
and phagocytose \textit{M.tb.}, but fail to kill them. Instead, they turn into infected macrophages, in whose interior the bacteria multiply. At the same time, they release chemokines that attract cells forming the adaptive immune response (T  lymphocytes). These necrotize infected macrophages. An observable consequence of this process is the development of granuloma, roughly spherical aggregations consisting of bacteria, macrophages and other immune cells. Seemingly protecting the host by walling of the infection, on the other hand granuloma provide a niche promoting bacterial proliferation \cite{ramakrishnan2012revisiting}. 

Following \cite{feng}, the PDE system to be studied describes the spatio-temporal evolution of the concentration of (healthy) macrophages ($u$) and (extracellular) bacteria ($v$), infected macrophages ($w$) and lymphocytes ($z$), as most important contributors to the process. In order to keep the amount of components low and the system analytically tractable, additional, less important factors (separate chemical signal substances, further differentiated types of immune cells) are neglected. The model then reads 

\begin{align}\label{P}
  \begin{cases}
    u_t = \Du \Delta u - \chiu \nabla \cdot (u \nabla v) - \gammau uv - \deltau u + \betau & \text{in $\Omega \times (0, \infty)$}, \\
    v_t = \Dv \Delta v + \rhov v - \gammav uv + \muv w                                     & \text{in $\Omega \times (0, \infty)$},\\
    w_t = \Dw \Delta w + \gammaw uv - \alphaw wz - \muw w                                  & \text{in $\Omega \times (0, \infty)$},\\
    z_t = \Dz \Delta z - \chiz \nabla \cdot (z \nabla w) + \alphaz f(w)z - \deltaz z       & \text{in $\Omega \times (0, \infty)$}, \\
    \partial_\nu u = \partial_\nu v = \partial_\nu w = \partial_\nu z = 0                  & \text{on $\partial \Omega \times (0, \infty)$}, \\
    (u, v, w, z)(\cdot, 0) = (u_0, v_0, w_0, z_0)                                          & \text{in $\Omega$},
  \end{cases}
\end{align}
where
\begin{align}\label{eq:intro:params}
  \Du, \Dv, \Dw, \Dz,
  \chiu, \chiz,
  \gammau, \gammav,\gammaw,
  \muv, \muw,
  \alphaw, \alphaz,
  \deltau, \rhov, \deltaz,
  \betau
  > 0
\end{align}
and $\Omega$ is a sufficiently smooth bounded domain in $ℝ^2$ or $ℝ^3$  and $f(w)=w$ (for generalizations, see \eqref{cond:f} below). 

Effects 
modeled by \eqref{P} are the following: Healthy macrophages (of density $u$), produced with constant rate $\betau$ and finite life expectancy (determined by $\deltau$), move randomly (with effective diffusivity $\Du$) and are chemotactically attracted by bacteria \textit{M.~tb.~}(with chemotactic sensitivity $\chiu$), upon contact with which they are infected and turned into infected macrophages (the strength of this effect is given by $\gammau$). Bacteria ($v$) diffuse ($\Dv$) and replicate (with rate $\rhov$), are engulfed when encountering healthy macrophages ($\gammav$) and released from infected macrophages, the amount of bacteria released by macrophages encoded in $\muv$. Infected macrophages ($w$) diffuse ($\Dw$), are created from engulfment of bacteria by healthy macrophages ($\gammaw$), are necrotized by lymphocytes upon contact ($\alphaw$) and die with rate $\muw$. Lymphocytes diffuse ($\Dz$), are chemotactically attracted by infected macrophages ($\chiz$) and die with rate $\deltaz$. Their reproduction rate $\alphaz f(w)$ depends on the presence of infected macrophages. In \cite{feng}, a log-sigmoidal form of T cell activation was chosen, so that the replication term reads $\alphaz z \f{w}{\betaz +w}$ with some $\betaz>0$. The important effect is that the more infected macrophages there are, the more T cells are needed and produced. We encode this effect in \eqref{P} in the term $+\alphaz f(w)z$, and admit the choice $f(w)=w$, which is more idealized and, and from the point of view of existence analysis, seems to be the worst-growing biologically sensible case. 

In \cite{feng}, \eqref{P} (with $f(w)=\f{w}{\betaz+w}$) was investigated regarding stability of equilibria in the corresponding ODE system and \eqref{P} was numerically studied in one- and two-dimensional spatial domains, where the effect of stronger immune response (i.e.\ larger $\chiu$) was clearly visible (cf.\ \cite[Fig.\ 8--10]{feng}). This model stands in the tradition of \cite{gammack_doering_kirschner}, which presented a spatio-temporal model of the innate immune response (i.e.\ the initial response without participation of T cells), without stable latent solution. (For a discussion of the relation with models on different scales, see also \cite{gammack_ganguli_marino_kirschner}.) 
A different approach to granulomas (with analysis of their growth and stable states) is given by the free-boundary model of macrophages and bacteria of \cite{friedman_lam}. 
On the other hand, more involved models for immune response, with different types of macrophages, leukocytes, signal chemicals have been introduced, \cite{hao_schlesinger_friedman,catalaPLOS,su_zhou_dorman_jones}. In these cases, however, theoretical results beyond numerical experiments and possibly sensitivity analysis seem to be lacking.

Our aim in the present article is to show global existence of solutions of \eqref{P}:
\begin{thm}\label{TH;2D}
  Let $\Omega \subset \R^2$ be a smooth, bounded domain, suppose \eqref{eq:intro:params} and 
  \begin{equation}\label{cond:f}
   f\in C^1(ℝ), \qquad 0\le f(w)\le w \quad \text{for all }  w\in(0,\infty), 
  \end{equation}
  let $q > 2$
  and let
  \begin{align}\label{eq:2d_main:init}
    (u_0, v_0, w_0, z_0) \in \con0 \times \sob1q \times \sob1q \times \con0
    \quad \text{be nonnegative.}
  \end{align}
  Then there exists a global classical solution $(u, v, w, z)$ of \eqref{P}.
\end{thm}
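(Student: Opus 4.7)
My plan follows the standard programme for strongly coupled chemotaxis systems: (i) construct smooth solutions $(\ue,\ve,\we,\ze)$ to a suitable family of regularized problems on a maximal interval $(0,\tmaxe)$; (ii) derive $\eps$-independent a~priori bounds up to any prescribed time $T<\infty$ via quasi-energy functionals tailored to the $(u,v)$- and $(z,w)$-subsystems; (iii) upgrade these bounds to $L^\infty$-bounds on $\ue,\ze$ and $\sob1q$-bounds on $\ve,\we$ by a bootstrap argument; and (iv) pass to the limit $\eps\searrow 0$ and invoke an extensibility criterion to conclude.

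For step (i), I would regularize the cross-diffusion by replacing the factors $\ue$ and $\ze$ in the chemotactic fluxes by $\ue/(1+\eps\ue)$ and $\ze/(1+\eps\ze)$, mollifying the initial data if necessary. A Banach fixed-point argument combined with parabolic Schauder theory, together with nonnegativity via the scalar maximum principle, yields classical solutions on $(0,\tmaxe)$ and the customary blow-up dichotomy. Integration of the equations and an $L^1$-Grönwall loop, exploiting $f(w)\le w$ from \eqref{cond:f} as well as the decay/absorption produced by $-\deltau u+\betau$ and $-\muw w$, furnishes a priori control of $\io(\ue+\ve+\we+\ze)$ on any finite time interval.

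The core of the argument is the quasi-energy estimate. Following the classical strategy for chemotaxis--consumption systems, I would test the $u$-equation with $\log\ue$ and combine the result with a weighted evolution of $\io|\nabla\ve|^2/\ve$, tuning the coefficients so that the cross-term $\chiu\io\nabla\ue\cdot\nabla\ve$ cancels between the two contributions. This yields a differential inequality of the schematic form
\[
  \ddt\!\left(\io\ue\log\ue + \kappa\io\f{|\nabla\ve|^2}{\ve}\right) + c_1\io\f{|\nabla\ue|^2}{\ue} + c_2\io\ve\bigl|D^2\log\ve\bigr|^2 \le R_1(t),
\]
where the remainder $R_1$ bundles the contributions of the extra reaction terms $+\rhov\ve$, $+\muv\we$, $-\gammau\ue\ve$ and $-\deltau\ue+\betau$. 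In a pure consumption setting one may simply discard the dissipation $\io\ve|D^2\log\ve|^2$; here it must be retained and used, because — after the standard identities and boundary manipulations from the chemotaxis-consumption literature — it dominates $\io|\nabla\ve|^4/\ve^3$, which is exactly the shape that the troublesome contributions of $+\rhov\ve$ and $+\muv\we$ assume after Young's inequality. An entirely analogous functional is then designed for the $(\ze,\we)$-subsystem, with the additional nuisance term $+\alphaz f(\we)\ze$ tamed by the same absorption mechanism together with $f(w)\le w$, and the sum of the two resulting inequalities is closed on any finite interval by Grönwall.

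Once $\ue\in L^\infty(0,T;L\log L(\Omega))$, $\ve\in L^\infty(0,T;\sob12)$ and the $(\ze,\we)$-analogues are in hand, the remainder is routine in two dimensions: the embedding $\sob12\embed\leb{p}$ for every $p<\infty$, combined with standard smoothing estimates for the Neumann heat semigroup applied to the $v$- and $w$-equations in their integral formulation, promotes $\ve,\we$ to $L^\infty(0,T;\sob1q)$; iterative $L^p$-testing in the $u$- and $z$-equations then lifts $\ue,\ze$ to $L^\infty$, after which parabolic Schauder theory delivers the Hölder and classical regularity needed to pass to the limit $\eps\searrow 0$ via Arzelà--Ascoli. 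The main obstacle — precisely the one flagged in the abstract — is preventing the positive source terms $+\rhov\ve$, $+\muv\we$, $+\gammaw\ue\ve$ and $+\alphaz f(\we)\ze$ from destroying the coercivity of the quasi-energy; the decisive idea, and the real content of the proof, is to retain rather than discard the dissipation $\io\ve|D^2\log\ve|^2$ (and its $\we$-counterpart) and to exploit it in a nontrivial absorption step.
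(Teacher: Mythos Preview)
Your overall programme (local existence, $L^1$ bounds, quasi-energy functionals, 2D bootstrap) is the right one, and the regularization you propose is harmless though in fact unnecessary in two dimensions: the paper works directly with $\sigma=\mathrm{id}$ and applies the extensibility criterion to the original system. The real issue lies in your description of how the source terms are absorbed in the quasi-energy step.

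You write that the contributions of $+\rhov v$ and $+\muv w$ to $\ddt\io|\nabla v|^2/v$ ``assume the shape $\io|\nabla v|^4/v^3$ after Young's inequality'' and are then eaten by the dissipation $\io v|D^2\ln v|^2$. This is not what happens. Computing the contribution of $+\muv w$ gives, after an integration by parts and Young,
\[
  -2\muv\io\frac{\Delta v}{v}\,w+\muv\io\frac{|\nabla v|^2}{v^2}\,w
  \;\le\;\muv\io\frac{|\nabla w|^2}{w},
\]
which involves $\nabla w$, not $|\nabla v|^4/v^3$; no application of Young turns it into a pure $v$-quantity. The paper's remedy is to enlarge the functional to include $\io w\ln w$ as well, so that the dissipation $-\Dw\io|\nabla w|^2/w$ produced by the $w$-equation absorbs this term. (The $+\rhov v$ contribution is just $\rhov\io|\nabla v|^2/v$ and goes into Gr\"onwall.) The same issue recurs, more severely, in the $(z,w)$-functional: the term $+\gammaw uv$ in the $w$-equation generates contributions containing $\nabla u$ and $\nabla v$, and these are controlled not by $\io w|D^2\ln w|^2$ but by feeding in the dissipative terms $\io|\nabla u|^2/u$ and $\io\frac{|\nabla v|^2}{v}u$ already obtained from the \emph{first} functional. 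So the two energies are not symmetric and cannot simply be summed; the $(u,v)$-estimate must be established first and its space-time dissipation reused.

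A second, related omission: the paper obtains $\|v\|_{L^\infty}$ (and $\|w\|_{L^p}$ for $p<\frac{n}{n-2}$) \emph{before} the energy step, by a short semigroup bootstrap that plays the $v$- and $w$-equations against each other. This $L^\infty$ bound on $v$ is then used repeatedly inside both quasi-energy computations (e.g.\ to control $\io uv\ln w$ and $\io\frac{|\nabla(\sigma(u)v)|^2}{\sigma(u)v}$). Your ordering---energy first, then upgrade $v$---would leave several terms uncontrolled. Once these two points are fixed, your bootstrap sketch is essentially correct; the $\io|\nabla v|^4/v^3$ bound \emph{is} used, but at the later stage of proving $u\in L^\infty(0,T;L^2)$, not inside the energy inequality itself.
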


In three-dimensional domains, the corresponding result is the following:

\begin{thm}\label{TH;3D}
  Let $\Omega \subset \R^3$ be a smooth, bounded domain, suppose \eqref{eq:intro:params} and \eqref{cond:f}
  and let
  \begin{align}\label{eq:3d_main:init}
    \begin{cases}
    (u_0, v_0, w_0, z_0) \in \lebl \times \leb\infty \times \leb3 \times \lebl \\
    \text{be nonnegative a.e.\ with }
    \frac{\nabla v_0}{\sqrt{v_0}}, \frac{\nabla w_0}{\sqrt{w_0}} \in \leb2.
    \end{cases}
  \end{align}
  Then there exists a global weak solution $(u, v, w, z)$ (in the sense of Definition~\ref{def:weak_sol} below) of \eqref{P}.
\end{thm}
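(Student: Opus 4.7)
The plan is to construct weak solutions by the standard approximation--compactness method: first regularize \eqref{P} so that classical, global solutions exist, then derive $\eps$-independent a priori bounds strong enough to pass to the limit in the weak formulation of Definition~\ref{def:weak_sol}. A natural approximation is obtained by replacing the chemotactic fluxes $\chi_u u \na v$ and $\chi_z z \na w$ by $\chi_u u \na v/(1+\eps u)$ and $\chi_z z \na w/(1+\eps z)$ respectively, and by mollifying the initial data into smooth strictly positive functions. The resulting sensitivities are bounded, and an argument analogous to the one underlying Theorem~\ref{TH;2D} then delivers global classical nonnegative solutions $(\ue, \ve, \we, \ze)$ on $\Om \times (0,\infty)$ of the approximate system.

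For $\eps$-uniform estimates I would first extract mass control: integrating the $u$-, $w$- and $z$-equations over $\Om$ and combining them with appropriate weights (so that the cross terms $\gamma_u uv$, $\gamma_w uv$ and $\alpha_w wz$ cancel, while $\alpha_z f(w)z \leq \alpha_z wz$ uses \eqref{cond:f} to compensate the growth of $z$) produces an $L^\infty(0,T; L^1(\Om))$-bound on $\ue, \we, \ze$, whereas a Gronwall argument on the $v$-equation yields an $L^\infty(0,T; L^p(\Om))$-bound on $\ve$ for every finite $p$. The decisive step is the analysis of the coupled quasi-energy
\[
  \mc{F}(t) \defs \intom \ue \log \ue + \tfrac{1}{2}\intom \frac{|\na \ve|^2}{\ve} + \kappa \left(\intom \ze \log \ze + \tfrac{1}{2}\intom \frac{|\na \we|^2}{\we}\right)
\]
with a suitable $\kappa > 0$ to be tuned. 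Differentiating each summand along the flow creates, besides the classical dissipation terms $\intom |\na\ue|^2/\ue$, $\intom \ve |D^2 \log \ve|^2$ and their $(z,w)$-analogues, extra contributions stemming from the non-consumptive terms $\rho_v v$, $\muv w$, $\gammaw uv$ and $\alphaz f(w) z$ which are absent in a pure chemotaxis--consumption setting.

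The main obstacle, as highlighted in the abstract, is precisely to tame those extra contributions. The critical observation is that the higher-order dissipation $\intom \ve |D^2 \log \ve|^2$, which in purely consumptive models is usually discarded, has to be retained and exploited: by the pointwise inequality $|D^2\log \ve|^2 \geq \tfrac{1}{n}|\Delta \log \ve|^2$ together with $\ve \Delta \log \ve = \Delta \ve - |\na \ve|^2/\ve$, it controls $\|\Delta \sqrt{\ve}\|_{L^2(\Om)}^2$, and analogously for $\we$; Sobolev embedding in $\R^3$ then upgrades this to higher integrability of $\na \ve, \na \we$. By first coupling the $(u,v)$- and $(z,w)$-energies through $\kappa$ (so that the shared source $\gammaw uv$ is estimated only once), then splitting the remaining sign-indefinite contributions via Young's inequality, every problematic term can be absorbed into the retained higher-order dissipation, modulo lower-order pieces controlled by the mass estimates and a final Gronwall step. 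The boundary terms in $\tfrac{d}{dt}\intom \frac{|\na \ve|^2}{\ve}$ arising from $\pa \Om$ are handled by the Mizoguchi--Souplet-type trick (or by convex approximation of $\Om$). This yields
\[
  \ue, \ze \in L^\infty(0,T; L\log L(\Om)), \qquad \na \sqrt{\ve}, \na \sqrt{\we} \in L^\infty(0,T; L^2(\Om)),
\]
as well as space-time $L^2$-bounds for $\na \sqrt{\ue}, \na \sqrt{\ze}, \Delta \sqrt{\ve}, \Delta \sqrt{\we}$.

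Equipped with these bounds, the equations themselves provide uniform $\eps$-estimates on the time derivatives $\pa_t \ue, \pa_t \ve, \pa_t \we, \pa_t \ze$ in suitable negative-order Sobolev spaces, so the Aubin--Lions lemma produces strong $L^1(\Om \times (0,T))$-compactness of $(\ue, \ze)$ (upgraded by Vitali-type equi-integrability coming from the $L\log L$-bound) and strong $L^2(\Om \times (0,T))$-compactness of $(\ve, \we)$, together with a.e.\ convergent subsequences. These convergences are enough to identify all limits in the weak formulation; the only nontrivial passage is the one in the chemotactic products $\ue \na \ve$ and $\ze \na \we$, which converge in $L^1(\Om \times (0,T))$ via the combination of strong convergence of $\ue, \ze$ and weak convergence of $\na \ve, \na \we$ in $L^2$. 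A diagonal extraction as $T \to \infty$ then produces the desired global weak solution in the sense of Definition~\ref{def:weak_sol}.
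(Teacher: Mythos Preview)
Your overall strategy matches the paper's, but there is a genuine gap exactly where you locate ``the only nontrivial passage'': the convergence of $\ze\nabla\we$. Your stated bounds yield at best $\ze\in L^{5/3}(\Omega\times(0,T))$ (from $\nabla\sqrt{\ze}\in L^2_{t,x}$ and Gagliardo--Nirenberg in three dimensions) and $\nabla\we\in L^{5/2-\eta}(\Omega\times(0,T))$ for every $\eta>0$; since $\tfrac{3}{5}+\tfrac{2}{5}=1$ is attained only at the unavailable endpoint, $\ze\nabla\we$ is not even uniformly in $L^1$. Your proposed route via ``strong convergence of $\ze$ and weak convergence of $\nabla\we$ in $L^2$'' cannot work, because $\ze$ is not bounded in $L^2$. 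The paper isolates precisely this obstruction in its introduction. Note also that you have misidentified which dissipative term is the novel one: the contribution $\intom\ve|D^2\log\ve|^2$ that you emphasize is the \emph{standard} dissipation already used in the pure consumption setting; it is not what closes the argument here.

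The missing ingredient is a different dissipative term that the quasi-energy computation produces but that is customarily thrown away: differentiating $\intom\frac{|\nabla\we|^2}{\we}$ along the third equation yields, from the consumption term $-\alpha_w\we\sigmae(\ze)$, the sign-favourable contribution $-\alpha_w\intom\frac{|\nabla\we|^2}{\we}\sigmae(\ze)$. Retaining it gives $\frac{\sqrt{\sigmae(\ze)}}{\sqrt{\we}}\nabla\we$ bounded in $L^2_{t,x}$, and then one writes $\ze\nabla\we=\sqrt{\we\ze}\cdot\frac{\ze\sigmae'(\ze)}{\sigmae(\ze)}\cdot\frac{\sqrt{\sigmae(\ze)}}{\sqrt{\we}}\nabla\we$; the first two factors together converge strongly in $L^2$ (since $\we\ze$ is equi-integrable, lying in $L^{5/4-\eta}$) and the last weakly in $L^2$, so the product converges weakly in $L^1$, which suffices for the weak formulation. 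A secondary concern: your single coupling constant $\kappa$ is unlikely to close the combined functional $\mathcal{F}$ in one shot, because the source $+\gammaw uv$ in the $w$-equation injects the \emph{space-time} (not pointwise-in-time bounded) quantities $\intom\frac{|\nabla\ue|^2}{\ue}$ and $\intom\frac{|\nabla\ve|^2}{\ve}\sigmae(\ue)$ into the $(z,w)$-energy inequality; the paper therefore treats the $(u,v)$- and $(z,w)$-subsystems sequentially, feeding the dissipation of the first as an integrable forcing into the second.
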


\paragraph{Challenges compared to the chemotaxis--consumption system.}
Both the $(u, v)$- and the $(z, w)$-subsystem in \eqref{P} share some characteristics with the classic chemotaxis--consumption system
\begin{align}\label{prob:ct_con}
  \begin{cases}
    u_t = \Delta u - \nabla \cdot(u \nabla v), \\
    v_t = \Delta v - uv,
  \end{cases}
\end{align}
already proposed by Keller and Segel to model bacteria partially moving towards oxygen (\cite{KellerSegelTravelingBandsChemotactic1971});
see also \cite{LankeitWinklerDepletingSignalAnalysis2023} for a recent survey on \eqref{prob:ct_con} and close relatives thereof.
Although we ultimately obtain the same global well-posedness results as known for \eqref{prob:ct_con} (\cite{WinklerGlobalLargedataSolutions2012}),
that is, of classical solutions in two- and of weak solutions in three-dimensional settings,
the nonlinear coupling between these subsystems impedes a direct adaptation of the techniques from \cite{WinklerGlobalLargedataSolutions2012}.

While for \eqref{prob:ct_con} the maximum principle directly asserts boundedness of the signal,
this is no longer the case for \eqref{P}, as the latter contains the production terms $+\muv w$ and $+\gammaw uv$ in the second and third equation in \eqref{P}, respectively.
Mainly because a~priori estimates for $w$ in $L^{\frac{n}{2}+\eta}$ imply boundedness of $v$ and hence of $\intom uv$,
we can still show that $v$ and $w$ are bounded in $L^\infty$ and $L^p$ for $p<\frac{n}{n-2}$, respectively, see Lemma~\ref{lem;Linf-v}.
However, obtaining boundedness of $w$ seems to be out of reach in the three-dimensional setting
(essentially because it is entirely unclear how to obtain bounds for $u$ substantially stronger than those provided by the quasi-energy functional \eqref{eq:intro:functional} below).

Further a~priori estimates for \eqref{prob:ct_con} are then gained by the functional introduced in \cite{DuanEtAlGlobalSolutionsCoupled2010}, namely
\begin{align}\label{eq:intro:functional}
  \intom u \ln u + \io \frac{|\nabla v|^2}{v},
\end{align}
which decreases along trajectories if $\Omega = \R^n$ (\cite{DuanEtAlGlobalSolutionsCoupled2010}) or if $\Omega$ is smoothly bounded and convex (\cite{WinklerGlobalLargedataSolutions2012})
and is still a quasi-energy functional for general smooth, bounded domains (\cite{JiangEtAlGlobalExistenceAsymptotic2015}).
We follow this approach in Lemma~\ref{lem;firstEn} and Lemma~\ref{lem;secondEn} for the $(u, v)$- and the $(z, w)$-subsystem in \eqref{P}, respectively.
For the former, the additional term $+\muv w$ requires us to deal with the term $\intom \frac{|\nabla w|^2}{w}$ when differentiating \eqref{eq:intro:functional},
which, however, can be handled by adding $\intom w \ln w$ to the functional.
For the latter, we can then utilize a~priori estimates already obtained for $u$ and $v$.

Therefore, $\intom u \ln u$ and $\intom z \ln z$ remain bounded throughout (finite) evolution.
In two-dimensional settings, this implies global existence of classical solutions (cf.~\cite[Lemma 3.3]{BellomoEtAlMathematicalTheoryKeller2015}) and hence Theorem~\ref{TH;2D},
as we shall see in Section~\ref{sec:2d}.

For three-dimensional domains treated in Section~\ref{sec:3d}, we show that solutions to certain approximate problems converge to weak solutions of \eqref{P}.
To that end, the dissipative terms of the quasi-energy functional turn out to be crucial.
Regarding the $(z, w)$-subsystem, by following \cite{WinklerGlobalLargedataSolutions2012}, we have a~priori estimates such as
\begin{align}\label{eq:intro:space_time_est_1}
  \iio \frac{|\nabla z|^2}{z} \le C
  \quad \text{and} \quad
  \iio \frac{|\nabla w|^4}{w^3} \le C
\end{align}
at our disposal (cf.\ Lemma~\ref{lem;secondEn}).
Combining these with uniform-in-time estimates inter alia yields
\begin{align}\label{eq:intro:space_time_est_2}
  \iio z^\frac53 \le C, \quad
  \iio (wz)^{\frac54-\eta} \le C_\eta
  \quad \text{and} \quad
  \iio |\nabla w|^{\frac52-\eta} \le C_\eta, \quad
\end{align}
for all $\eta \in (0, \frac54)$, see Lemma~\ref{lm:spacetime_est}.
That is, even without an $L^\infty$ bound for the signal (which \cite{WinklerGlobalLargedataSolutions2012} can make use of),
we obtain space-time equi-integrability of $wz$.
Hence, corresponding products of the approximate solutions converge strongly in $L^1$, which takes care of the necrotization term $\intntom wz \varphi$ in Definition~\ref{def:weak_sol}.

The situation is worse for the taxis term $\intntom z \nabla w \cdot \nabla \varphi$.
If $w$ were bounded, $z \nabla w = z \cdot w^\frac34 \cdot w^{-\frac34} \nabla w$ would be equi-integrable by \eqref{eq:intro:space_time_est_1} and \eqref{eq:intro:space_time_est_2},
but, as discussed above, no such bound seems available.
Moreover, if \eqref{eq:intro:space_time_est_2} held for $\eta = 0$, then Young's inequality would also imply integrability of $z \nabla w$
(which could even be improved to equi-integrability by making use of boundedness of $\intom z \ln z$ instead of just $\intom z$, see for instance \cite[Corollary~1.2]{WinklerLogarithmicallyRefinedGagliardoNirenberg2023}).
However, it seems nontrivial to actually improve \eqref{eq:intro:space_time_est_2} to the endpoint case $\eta = 0$. 

Fortunately, the quasi-energy functional generates another dissipative term
(which has been simply discarded in \cite[Proof of Lemma~3.4]{WinklerGlobalLargedataSolutions2012}),
namely $\intntom \frac{|\nabla w|^2}{w} z$ (see Lemma~\ref{lem;secondEn}).
That is, $z \nabla w = \sqrt{wz} \cdot \frac{\sqrt{z}}{\sqrt{w}} \nabla w$ is a product of two $L^2$ functions and hence integrable.
Moreover, as discussed above, the first factor (of the corresponding product for the approximate equations) converges strongly in $L^2$ so that the product converges weakly in $L^1$.
Since the other terms in the weak formulation of \eqref{P} can be seen to converge to their appropriate counterparts as well,
we are finally able to prove Theorem~\ref{TH;3D} in Lemma~\ref{lm:uvwz_weak_sol}.

\section{Preliminaries}
In this section, we collect some general inequalities used multiple times in the sequel.
We start with stating a consequence of the Gagliardo--Nirenberg inequality.
\begin{lem}\label{lm:space_time_interpol}
  Let $n \in \N$, $\Omega \subset \R^n$ be a smooth, bounded domain, $\lambda \in (0, 1)$, $p > 0$ and $q \ge 1$.
  For all $K > 0$, we can then find $C > 0$ such that all nonnegative $\varphi \in \leb p$ with $\varphi^\lambda \in \sob1q$ and
  \begin{align}\label{eq:space_time_interpol:ass}
    \|\varphi\|_{\leb p} \le K
  \end{align}
  satisfy
  \begin{align}\label{eq:space_time_interpol:est}
    \intom \varphi^{\frac{q(\lambda n + p)}{n}} + \intom |\nabla \varphi|^\frac{q(\lambda n + p)}{n + p} \le C \intom |\nabla \varphi^\lambda|^q + C
  \end{align}
  In particular, for all $K' > 0$, there is $C' > 0$ such that all nonnegative functions $φ$ with $\sqrt{\varphi} \in \sob12$ and $\intom \varphi \le K'$ fulfil
  \begin{align}\label{eq:space_time_interpol:special}
    \intom \varphi^{\frac{n+2}{n}} + \intom |\nabla \varphi|^\frac{n+2}{n+1} \le C' \intom \frac{|\nabla \varphi|^2}{\varphi} + C'
  \end{align}
\end{lem}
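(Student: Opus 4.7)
The plan is to apply the Gagliardo--Nirenberg inequality to $\psi \defs \varphi^\lambda$. Under this substitution, the hypothesis $\|\varphi\|_{\leb p}\le K$ becomes $\|\psi\|_{\leb{p/\lambda}}\le K^\lambda$, and one has $\intom\varphi^{q(\lambda n+p)/n}=\intom\psi^r$ with $r\defs\frac{q(\lambda n+p)}{\lambda n}$, while the pointwise identity $\nabla\varphi=\lambda^{-1}\psi^{(1-\lambda)/\lambda}\nabla\psi$ yields $\intom|\nabla\varphi|^\alpha=\lambda^{-\alpha}\intom\psi^{(1-\lambda)\alpha/\lambda}|\nabla\psi|^\alpha$ with $\alpha\defs\frac{q(\lambda n+p)}{n+p}$. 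The exponents $r$ and $\alpha$ are chosen precisely so that the two interpolations below balance out.

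First I would treat the $\psi^r$-integral by invoking
\[
  \|\psi\|_{\leb r}\le C\bigl(\|\nabla\psi\|_{\leb q}^{\theta}\|\psi\|_{\leb{p/\lambda}}^{1-\theta}+\|\psi\|_{\leb{p/\lambda}}\bigr)
\]
with interpolation parameter $\theta\defs q/r=\frac{\lambda n}{\lambda n+p}\in(0,1)$, for which the Gagliardo--Nirenberg scaling identity $\tfrac{1}{r}=\theta\bigl(\tfrac{1}{q}-\tfrac{1}{n}\bigr)+(1-\theta)\tfrac{\lambda}{p}$ is readily verified. Raising to the power $r$, exploiting $r\theta=q$ together with the uniform $L^{p/\lambda}$-bound on $\psi$, produces $\intom\psi^r\le C_1\intom|\nabla\psi|^q+C_1$, which is the first half of \eqref{eq:space_time_interpol:est}.

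For the gradient term I would then apply Hölder's inequality with the conjugate pair $\bigl(\frac{n+p}{(1-\lambda)n},\,\frac{n+p}{\lambda n+p}\bigr)$: this is precisely the choice that turns
\[
  \intom\psi^{(1-\lambda)\alpha/\lambda}|\nabla\psi|^\alpha\le\Bigl(\intom\psi^r\Bigr)^{\frac{(1-\lambda)n}{n+p}}\Bigl(\intom|\nabla\psi|^q\Bigr)^{\frac{\lambda n+p}{n+p}}
\]
into a product of the two quantities controlled in the previous step. Inserting that bound and applying Young's inequality (the two exponents above sum to $1$) gives the second half of \eqref{eq:space_time_interpol:est}. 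The special case \eqref{eq:space_time_interpol:special} then follows by specialising to $p=1$, $\lambda=\tfrac12$, $q=2$, noting that $\intom|\nabla\psi|^q=\tfrac14\intom\frac{|\nabla\varphi|^2}{\varphi}$ and that the exponents collapse to $\frac{n+2}{n}$ and $\frac{n+2}{n+1}$.

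I do not anticipate a genuine obstacle here; the argument is essentially exponent bookkeeping. The only points that require a moment of care are the admissibility of the Gagliardo--Nirenberg inequality (namely $\theta\in(0,1)$ and $r$ finite, both immediate from $\lambda\in(0,1)$ and $p,q>0$) and, should $p/\lambda$ happen to fall below~$1$, appealing to a version of the inequality that accommodates such sub-$L^1$ reference norms.
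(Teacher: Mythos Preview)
Your proposal is correct and follows essentially the same route as the paper: the same Gagliardo--Nirenberg application to $\psi=\varphi^\lambda$ with the same interpolation parameter $\theta=\frac{\lambda n}{\lambda n+p}$ handles the first summand, and the same pointwise identity $\nabla\varphi=\lambda^{-1}\varphi^{1-\lambda}\nabla\varphi^\lambda$ underlies the second. The only cosmetic difference is that the paper splits the gradient integrand via a \emph{pointwise} Young inequality (obtaining $|\nabla\varphi|^\alpha\le C\varphi^{q(\lambda n+p)/n}+|\nabla\varphi^\lambda|^q$ and then integrating), whereas you first apply H\"older on the integral and then Young on the resulting product; both reductions land on the already-controlled quantities $\intom\psi^r$ and $\intom|\nabla\psi|^q$, and the paper also separately disposes of the trivial case $\frac{q(\lambda n+p)}{n}\le p$, which in your formulation is absorbed by the additive lower-order term in the bounded-domain Gagliardo--Nirenberg inequality.
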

\begin{remark}
  As the proof will show, the first summand on the left of \eqref{eq:space_time_interpol:est} can be estimated in the same way also if $λ\ge1$.
\end{remark}
\begin{proof}
  If $s \defs \frac{q(\lambda n + p)}{n} \le p$, the first term in \eqref{eq:space_time_interpol:est} is directly estimated by \eqref{eq:space_time_interpol:ass}.
  Else, we set $\theta \defs \frac{\lambda q}{s} = \frac{\lambda n}{\lambda n + p} \in (0, 1)$.
  Then
  \begin{align*}
      \left(1 - \frac{n}{q}\right) \theta + \left(0 - \frac{\lambda n}{p} \right) (1 - \theta)
    = \frac{\lambda n}{\lambda n + p} - \frac{n}{q} \cdot \frac{\lambda q}{s} - \frac{\lambda n}{p} \cdot \frac{p}{\lambda n + p}
    = 0 - \frac{n\lambda}{s},
  \end{align*}
  so that the Gagliardo--Nirenberg inequality
  (which due to $\frac{s}{\lambda} \ge \frac{p}{\lambda}$ and $q\ge 1$ is applicable even if $\f s{λ}<1$ or $\f{q}{λ}<1$, see \cite[Lemma~2.3]{LiLankeitBoundednessChemotaxisHaptotaxis2016})
  asserts that there is $c_1 > 0$ such that
  \begin{align*}
        \|\varphi\|_{\leb s}^s
    =   \|\varphi^\lambda \|_{\leb{\frac{s}{\lambda}}}^\frac{s}{\lambda}
    \le c_1 \|\varphi^\lambda\|_{\sob1q}^\frac{s\theta}{\lambda} \|\varphi^\lambda\|_{\leb{\frac{p}{\lambda}}}^\frac{s(1-\theta)}{\lambda}
    =   c_1 \|\varphi^\lambda\|_{\sob1q}^q \|\varphi\|_{\leb p}^{s(1-\theta)}
  \end{align*}
  for all considered $\varphi$. That is, the first summand in \eqref{eq:space_time_interpol:est} can be appropriately estimated.
   
  The restriction $\lambda \in (0, 1)$ entails that $r \defs \frac{q(\lambda n + p)}{n + p} < q$, so that Young's inequality implies
  \begin{align*}
        |\nabla \varphi|^r
    =   \lambda^{-r} \varphi^{(1-\lambda)r} |\nabla \varphi^\lambda|^r
    \le \lambda^{-\frac{qr}{q-r}} \varphi^{\frac{(1-\lambda)qr}{q-r}} + |\nabla \varphi^\lambda|^q
  \end{align*}
  for all considered $\varphi$,
  where  
  \begin{align*}
      \frac{(1-\lambda)qr}{q-r}
    = \frac{1-\lambda}{\frac1r-\frac1q}
    = \frac{1-\lambda}{\frac{n + p}{q(\lambda n + p)} - \frac1q}
    = \frac{(1-\lambda)q(\lambda n + p)}{n - \lambda n}
    = \frac{q(\lambda n + p)}{n}.
  \end{align*}
  Therefore, we also obtain the estimate for the remaining term in \eqref{eq:space_time_interpol:est}.

  Finally, \eqref{eq:space_time_interpol:special} follows from the general case by setting $\lambda = \frac12$, $p = 1$ and $q = 2$.
\end{proof}

Next, we recall several inequalities regarding $\intom \varphi |D^2 \ln \varphi|^2$.
\begin{lem}\label{lm:nabla_varphi_l14}
  Let $n \in \N$ and $\Omega \subset \R^n$ be a smooth, bounded domain.
  Then
  \begin{align}\label{eq:nabla_varphi_l14:est1}
          \intom \frac{|\nabla \varphi|^4}{\varphi^3}
    &\le  (2 + \sqrt n)^2 \intom \varphi |D^2 \ln \varphi|^2
  \intertext{and}\label{eq:nabla_varphi_l14:est2}
          \intom |D^2 \sqrt \varphi|^2
    &\le  \left(1 + \frac{\sqrt n}{2} + \frac{n}{8}\right) \intom \varphi |D^2 \ln \varphi|^2
  \end{align}
  for all positive $\varphi\in C^2(\ol{\Omega})$ with $\partial_\nu \varphi = 0$ on $\partial \Omega$.
\end{lem}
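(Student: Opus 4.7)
The plan is to reduce each of \eqref{eq:nabla_varphi_l14:est1} and \eqref{eq:nabla_varphi_l14:est2} to a manageable form by changing variables and then exploiting a single pointwise algebraic identity together with one integration by parts (to kill the boundary term via the Neumann condition) followed by Cauchy--Schwarz. For \eqref{eq:nabla_varphi_l14:est1}, I would set $v\defs\ln\varphi$, which inherits $\partial_\nu v=0$ from $\partial_\nu\varphi=0$, and observe that the desired inequality is equivalent to
\[
\intom\varphi|\nabla v|^4\le(2+\sqrt n)^2\intom\varphi|D^2 v|^2.
\]
Using $\nabla\varphi\cdot\nabla v=\varphi|\nabla v|^2$, a direct computation yields
\[
\nabla\cdot\bigl(\varphi|\nabla v|^2\nabla v\bigr)=\varphi|\nabla v|^4+2\varphi D^2 v[\nabla v,\nabla v]+\varphi|\nabla v|^2\Delta v,
\]
whose integral over $\Omega$ vanishes by the divergence theorem. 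Combining the pointwise bounds $|D^2 v[\nabla v,\nabla v]|\le|D^2v|\,|\nabla v|^2$ and $|\Delta v|\le\sqrt n\,|D^2v|$ (the latter being Cauchy--Schwarz applied to the trace of $D^2v$), I arrive at
\[
\intom\varphi|\nabla v|^4\le(2+\sqrt n)\intom\varphi|D^2v|\,|\nabla v|^2\le(2+\sqrt n)\sqrt{\intom\varphi|D^2v|^2}\sqrt{\intom\varphi|\nabla v|^4},
\]
and dividing by $\bigl(\intom\varphi|\nabla v|^4\bigr)^{1/2}$ before squaring completes the proof of \eqref{eq:nabla_varphi_l14:est1}.

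For \eqref{eq:nabla_varphi_l14:est2}, I would set $u\defs\sqrt\varphi$ and start from the pointwise identity
\[
D^2u=uD^2\ln u+u^{-1}\nabla u\otimes\nabla u,
\]
which is immediate from $\partial_{ij}\ln u=u^{-1}\partial_{ij}u-u^{-2}\partial_i u\partial_j u$. Squaring and then simplifying the arising cross term via $D^2\ln u[\nabla u,\nabla u]=u^{-1}D^2u[\nabla u,\nabla u]-u^{-2}|\nabla u|^4$ leads, upon integration, to
\[
\intom|D^2u|^2=\intom u^2|D^2\ln u|^2+2\intom\frac{D^2u[\nabla u,\nabla u]}{u}-\intom\frac{|\nabla u|^4}{u^2}.
\]
Writing $\alpha^2\defs\intom|D^2u|^2$, $\beta^2\defs\intom|\nabla u|^4/u^2$, $\gamma^2\defs\intom u^2|D^2\ln u|^2$, Cauchy--Schwarz on the middle term yields $\alpha^2\le\gamma^2+2\alpha\beta-\beta^2$, that is, $(\alpha-\beta)^2\le\gamma^2$, whence $\alpha\le\beta+\gamma$. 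Rewriting \eqref{eq:nabla_varphi_l14:est1} in terms of $u$ via $|\nabla\varphi|^4/\varphi^3=16|\nabla u|^4/u^2$ and $\varphi|D^2\ln\varphi|^2=4u^2|D^2\ln u|^2$ gives $\beta\le\tfrac{2+\sqrt n}{2}\gamma$, hence $\alpha\le\tfrac{4+\sqrt n}{2}\gamma$, and squaring yields \eqref{eq:nabla_varphi_l14:est2} (in fact with the slightly smaller constant $1+\tfrac{\sqrt n}{2}+\tfrac{n}{16}$).

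The main delicate point is spotting the right divergence structure behind part~(a): a naive IBP starting directly from $\intom|\nabla\varphi|^4/\varphi^3$ produces terms involving $\intom|D^2\varphi|^2/\varphi$ that cannot be absorbed back into $\intom\varphi|D^2\ln\varphi|^2$ without additional information, whereas passing to $v=\ln\varphi$ converts the quartic fraction into a product $\varphi|\nabla v|^4$ that embeds naturally into the divergence $\nabla\cdot(\varphi|\nabla v|^2\nabla v)$, whose boundary contribution vanishes under $\partial_\nu v=0$, while the identity $\varphi|D^2\ln\varphi|^2=\varphi|D^2 v|^2$ makes the right-hand side perfectly compatible with a Cauchy--Schwarz closure. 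Once part~(a) is available, part~(b) follows essentially mechanically from the pointwise decomposition of $D^2\sqrt\varphi$.
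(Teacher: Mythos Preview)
Your proof is correct. The paper does not actually argue either estimate; it simply cites \cite[Lemma~3.3]{WinklerGlobalLargedataSolutions2012} for \eqref{eq:nabla_varphi_l14:est1} and \cite[Lemma~A.1]{CieslakEtAlExistenceGlobalSolutions2024} for \eqref{eq:nabla_varphi_l14:est2}. Your treatment of \eqref{eq:nabla_varphi_l14:est1} via the divergence identity for $\varphi|\nabla v|^2\nabla v$ with $v=\ln\varphi$ is precisely the standard argument underlying the cited reference, so there is no real difference there. For \eqref{eq:nabla_varphi_l14:est2}, your route through the pointwise decomposition $D^2u=uD^2\ln u+u^{-1}\nabla u\otimes\nabla u$ and the resulting inequality $(\alpha-\beta)^2\le\gamma^2$ is a genuinely self-contained alternative to whatever the cited lemma does, and---as you note---it actually yields the sharper constant $\frac{(4+\sqrt n)^2}{16}=1+\frac{\sqrt n}{2}+\frac{n}{16}$ in place of $1+\frac{\sqrt n}{2}+\frac{n}{8}$. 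This is a small bonus: the improved constant is irrelevant for the paper's purposes, but your argument has the advantage of needing nothing beyond part~(a) and elementary algebra, with no additional integration by parts or boundary estimate in part~(b).
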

\begin{proof}
  The first estimate is a special case of \cite[Lemma~3.3]{WinklerGlobalLargedataSolutions2012}, the second one is proven in \cite[Lemma~A.1]{CieslakEtAlExistenceGlobalSolutions2024} (see also \cite[(2.13)]{JiangEtAlGlobalExistenceAsymptotic2015}).
\end{proof}

\begin{lem}\label{lem;GeneralLemma}
  Let $n \in \N$ and $\Omega \subset \R^n$ be a smooth, bounded domain.
  Then there is $C > 0$ such that for all positive $\varphi\in C^2(\ol{\Omega})$ satisfying $\partial_\nu \varphi =0$ on $\partial\Omega$, it holds that
  \begin{align}\label{eq:GeneralLemma:est2}
        - 2\io \frac{|\Delta \varphi|^2}{\varphi}
        + \io \frac{|\nabla \varphi|^2}{\varphi^2}\Delta \varphi
    \le - \io \varphi |D^2 \ln \varphi|^2
        + C \intom \varphi.
  \end{align}
\end{lem}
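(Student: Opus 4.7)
The plan is to convert \eqref{eq:GeneralLemma:est2} into an identity modulo a single boundary correction via integration by parts, and then to absorb that boundary term using a trace inequality together with Lemma~\ref{lm:nabla_varphi_l14}.

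To set up the identity, I would expand $D^2 \ln \varphi = \varphi^{-1} D^2 \varphi - \varphi^{-2} \nabla \varphi \otimes \nabla \varphi$ and take Frobenius norms, obtaining
\[
 \varphi |D^2 \ln \varphi|^2 = \frac{|D^2 \varphi|^2}{\varphi} - 2\, \frac{D^2 \varphi(\nabla \varphi, \nabla \varphi)}{\varphi^2} + \frac{|\nabla \varphi|^4}{\varphi^3}.
\]
Using $D^2 \varphi(\nabla \varphi, \nabla \varphi) = \tfrac12 \nabla \varphi \cdot \nabla |\nabla \varphi|^2$ and $\partial_\nu \varphi = 0$, integration by parts produces the boundary-free identity $\int_\Omega \frac{D^2 \varphi(\nabla \varphi, \nabla \varphi)}{\varphi^2} = -\tfrac12 \int_\Omega \frac{|\nabla \varphi|^2 \Delta \varphi}{\varphi^2} + \int_\Omega \frac{|\nabla \varphi|^4}{\varphi^3}$. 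A second integration by parts rewriting $\int_\Omega \frac{|D^2 \varphi|^2}{\varphi}$ in terms of $\int_\Omega \frac{|\Delta \varphi|^2}{\varphi}$ does generate the boundary term $-\int_{\partial \Omega} \frac{(D^2 \varphi \cdot \nabla \varphi) \cdot \nu}{\varphi}$; exploiting that $\partial_\nu \varphi = 0$ forces $\nabla \varphi$ to be tangential on $\partial \Omega$, and that differentiating the Neumann condition in tangential directions identifies $(D^2 \varphi \cdot \nabla \varphi) \cdot \nu$ with $-\mathrm{II}(\nabla \varphi, \nabla \varphi)$ (for $\mathrm{II}$ the second fundamental form of $\partial \Omega$), I expect these two manipulations to combine into
\[
 \int_\Omega \varphi |D^2 \ln \varphi|^2 = \int_\Omega \frac{|\Delta \varphi|^2}{\varphi} - \tfrac12 \int_\Omega \frac{|\nabla \varphi|^2 \Delta \varphi}{\varphi^2} - \int_{\partial \Omega} \frac{\mathrm{II}(\nabla \varphi, \nabla \varphi)}{\varphi}.
\]
Multiplying by $-2$ then reduces \eqref{eq:GeneralLemma:est2} to the single boundary estimate $-2 \int_{\partial \Omega} \frac{\mathrm{II}(\nabla \varphi, \nabla \varphi)}{\varphi} \le \int_\Omega \varphi |D^2 \ln \varphi|^2 + C \int_\Omega \varphi$.

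For this bound, I would use $|\mathrm{II}| \le \kappa$ on $\partial \Omega$ and $\frac{|\nabla \varphi|^2}{\varphi} = 4 |\nabla \sqrt \varphi|^2$ to reduce the task to controlling $\int_{\partial \Omega} |\nabla \sqrt \varphi|^2$. A standard trace inequality applied to the scalar $|\nabla \sqrt \varphi|$ together with the pointwise bound $|\nabla |\nabla \sqrt \varphi||^2 \le |D^2 \sqrt \varphi|^2$ yields, for any $\eta > 0$,
\[
 \int_{\partial \Omega} |\nabla \sqrt \varphi|^2 \le \eta \int_\Omega |D^2 \sqrt \varphi|^2 + C_\eta \int_\Omega |\nabla \sqrt \varphi|^2.
\]
Estimate \eqref{eq:nabla_varphi_l14:est2} of Lemma~\ref{lm:nabla_varphi_l14} bounds the first term by a dimensional constant times $\int_\Omega \varphi |D^2 \ln \varphi|^2$; for the second, noting that $\partial_\nu \sqrt \varphi = 0$ follows from $\partial_\nu \varphi = 0$, I would integrate by parts to get $\int_\Omega |\nabla \sqrt \varphi|^2 = -\int_\Omega \sqrt \varphi\, \Delta \sqrt \varphi$ and then use Young's inequality, $|\Delta \sqrt \varphi|^2 \le n |D^2 \sqrt \varphi|^2$ and Lemma~\ref{lm:nabla_varphi_l14} once more to arrive at $\int_\Omega |\nabla \sqrt \varphi|^2 \le \varepsilon \int_\Omega \varphi |D^2 \ln \varphi|^2 + C_\varepsilon \int_\Omega \varphi$ for any $\varepsilon > 0$. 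Choosing $\eta$ and then $\varepsilon$ small enough makes the coefficient of $\int_\Omega \varphi |D^2 \ln \varphi|^2$ drop below $1/(8\kappa)$, whence the claim follows.

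The main obstacle is the careful bookkeeping of the two integrations by parts producing the identity above and, in particular, correctly identifying the boundary integrand as $-\mathrm{II}(\nabla \varphi, \nabla \varphi)/\varphi$ --- the sign of this term, and the fact that only the tangential components of $\nabla \varphi$ survive on $\partial \Omega$, are precisely what allow the final absorption step; a domain-independent pointwise argument is hopeless (the pointwise version of the claim fails), so the Neumann condition must be used essentially. Once the identity is in hand, the rest is a routine combination of the trace inequality with the Hessian bounds from Lemma~\ref{lm:nabla_varphi_l14}.
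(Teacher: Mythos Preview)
Your approach is correct and essentially coincides with the paper's: both establish the same integral identity (the paper cites it from \cite{WinklerGlobalLargedataSolutions2012}, you derive it by hand---your boundary integrand $-\mathrm{II}(\nabla\varphi,\nabla\varphi)/\varphi$ is exactly $\tfrac12\varphi^{-1}\partial_\nu|\nabla\varphi|^2$), and both absorb the boundary term via a trace estimate combined with \eqref{eq:nabla_varphi_l14:est2}. The only cosmetic difference is that the paper invokes the pre-packaged interpolation-type boundary inequality from \cite{JiangEtAlGlobalExistenceAsymptotic2015} for $\int_{\partial\Omega}\varphi^{-1}\partial_\nu|\nabla\varphi|^2$, whereas you unpack that step into the $\eta$-trace inequality for $|\nabla\sqrt\varphi|$; your route is slightly more self-contained but otherwise equivalent.
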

\begin{proof}
  Specializing the identities in \cite[(3.5) and the one at the top of page~331]{WinklerGlobalLargedataSolutions2012} to $g(\varphi) \defs \varphi$
  (which implies $\rho(\varphi) = \ln \varphi$ by \cite[(3.2)]{WinklerGlobalLargedataSolutions2012})
  shows
  \begin{align}\label{equal;Deltaphi^2/phi}
      - \io \frac{|\Delta \varphi|^2}{\varphi}
    = - \io \varphi |D^2 \ln \varphi|^2
      - \frac 12 \io \frac{|\nabla \varphi|^2}{\varphi^2}\Delta \varphi
      + \frac 12 \int_{\partial \Omega} \frac 1\varphi \partial_\nu |\nabla \varphi|^2
  \end{align}
  for all positive $\varphi\in C^2(\ol{\Omega})$ with $\partial_\nu \varphi =0$ on $\partial\Omega$.
  By \cite[(2.8)--(2.10)]{JiangEtAlGlobalExistenceAsymptotic2015}, there is $c_1 > 0$ such that
  \begin{align}\label{eq:GeneralLemma:bdry_est}
    \int_{\partial\Omega} \frac 1\varphi \partial_\nu |\nabla \varphi|^2 
    \le c_1 \left( \lp{2}{\Delta \sqrt{\varphi}}^{\frac{3+1/4}{2}} \lp{2}{\sqrt{\varphi}}^{\frac{1-1/4}{2}} + \lp{2}{\sqrt{\varphi}}^2 \right) 
  \end{align}
  holds for all such $\varphi$.
  We set $c_2 \defs (1 + \frac{\sqrt n}{2} + \frac{n}{8})$
  and note that Young's inequality asserts $ab \le \frac1{c_1c_2} a^\frac{4}{3+1/4} + c_3 b^\frac{4}{1-1/4}$ for some $c_3 > 0$ and all $a, b > 0$.
  Thus, combining \eqref{eq:GeneralLemma:bdry_est} and \eqref{eq:nabla_varphi_l14:est2} warrants that
  \begin{align*}
    \f12\int_{\partial\Omega} \frac 1\varphi \partial_\nu |\nabla \varphi|^2 
    \le \frac{1}{2c_2} \lp{2}{\Delta \sqrt{\varphi}}^2 + \f{c_1c_3 + c_1}{2} \lp{2}{\sqrt{\varphi}}^2
    \le \f12\io \varphi |D^2 \ln \varphi|^2 + \f{c_1 c_3+c_1}{2} \intom \varphi
  \end{align*}
  for all positive $\varphi\in C^2(\ol{\Omega})$ with $\partial_\nu \varphi =0$ on $\partial\Omega$,
  which yields \eqref{eq:GeneralLemma:est2} when plugged into \eqref{equal;Deltaphi^2/phi}.
\end{proof}

\section{Quasi-energy functionals for subsystems and a~priori estimates}\label{sec:combined}
In the present section, we consider
\begin{align}\label{P;sigma}
  \begin{cases}
    u_t = \Du \Delta u - \chiu \nabla \cdot (u \sigma'(u) \nabla v) - \gammau \sigma(u) v - \deltau u + \betau & \text{in $\Omega \times (0, \infty)$}, \\
    v_t = \Dv \Delta v + \rhov v - \gammav \sigma(u) v + \muv w                                                 & \text{in $\Omega \times (0, \infty)$}, \\
    w_t = \Dw \Delta w + \gammaw \sigma(u)v - \alphaw w \sigma(z) - \muw w                                      & \text{in $\Omega \times (0, \infty)$}, \\
    z_t = \Dz \Delta z - \chiz \nabla \cdot (z \sigma'(z) \nabla w) + \alphaz f(w)\sigma(z) - \deltaz z         & \text{in $\Omega \times (0, \infty)$}, \\
    \partial_\nu u = \partial_\nu v = \partial_\nu w = \partial_\nu z = 0                                       & \text{on $\partial \Omega \times (0, \infty)$}, \\
    (u, v, w, z)(\cdot, 0) = (u_0, v_0, w_0, z_0)                                                               & \text{in $\Omega$},
  \end{cases}
\end{align}
where $(u_0, v_0, w_0, z_0) \in \con0 \times \sob1q \times \sob1q \times \con0$ for some $q > n$ is nonnegative and such that
\begin{align}\label{eq:def_A}
    \intom u_0 \ln u_0 + \|v_0\|_{\leb\infty} + \|w_0\|_{\leb3} + \intom z_0 \ln z_0
  + \intom \frac{|\nabla v_0|^2}{v_0} + \intom \frac{|\nabla w_0|^2}{w_0} \le A
\end{align}
for some $A > 0$,
and where $\sigma$ belongs to
\begin{align}\label{eq:Sigma}
  \Sigmaset \defs \Big\{\tilde \sigma \in C^\infty([0, \infty)) \;\Big\vert\; \tilde \sigma(0) = 0,\, 0 \le \tilde \sigma'(s) \le 1,\, \frac{s\tilde{\sigma}'(s)}{\tilde{\sigma}(s)}\le 2\ \mbox{for every}\ s> 0  \Big\}.
\end{align}
System \eqref{P;sigma} reduces to \eqref{P} for the choice $\sigma = \operatorname{id} \in \Sigmaset$, which we shall take in Section~\ref{sec:2d} for proving Theorem~\ref{TH;2D}.
In order to construct weak solutions in three dimensional settings, and thus prove Theorem~\ref{TH;3D},
we will choose a family $(\sigma_\eps)_{\eps \in (0, 1)} \subset \Sigmaset$ converging pointwise to $\operatorname{id}$ as $\eps \sea 0$
and being such that the solutions of \eqref{P;sigma} with $\sigma_\eps$ instead of $\sigma$ are easily seen to be global in time, see Section~\ref{sec:3d}.
In order to then derive $\eps$-independent estimates from the results from the present section,
we need to make sure that all constants below are independent of $\sigma$ (as long as $\sigma \in \Sigmaset$) and that they depend on the initial data only through $A$.

We first state existence of local classical solutions of \eqref{P;sigma}, along with a criterion for when these are global.
\begin{lem}\label{lm:local_ex}
  Let $n \in \N$, let $\Omega \subset \R^n$ be a smooth, bounded domain, assume \eqref{eq:intro:params} and \eqref{cond:f}, let $\sigma \in \Sigmaset$
  and suppose that $(u_0, v_0, w_0, z_0) \in \con0 \times \sob1q \times \sob1q \times \con0$ for some $q > n$ is nonnegative.
  Then there exist $\tmax \in (0, \infty)$ and a uniquely determined solution
  \begin{align}\label{eq:local_ex:reg}
    (u, v, w, z) \in C^0\big([0, \tmax); \con0 \times \sob1q \times \sob1q \times \con0\big) \cap \big(C^\infty(\Ombar \times (0, \tmax))\big)^4
  \end{align}
  of \eqref{P;sigma} which is nonnegative (and even positive if the initial data are nontrivial) and has the property that
  \begin{align}\label{eq:local_ex:ext_crit}
    \tmax < \infty
    \quad \text{implies} \quad
    \limsup_{t \nea \tmax} \left( \|u(\cdot, t)\|_{\leb \infty} + \|z(\cdot, t)\|_{\leb \infty} \right) = \infty.
  \end{align}
  Furthermore, if $(u_0, v_0, w_0, z_0) \in \big(\con\infty\big)^4$, then $(u, v, w, z) \in \big(C^\infty(\Ombar \times [0, \tmax))\big)^4$.
\end{lem}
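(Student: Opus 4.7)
The plan is to treat \eqref{P;sigma} as a (semilinear, up to the bounded taxis gradient coefficient $\sigma'$) parabolic system and to run a standard Banach fixed-point argument based on the mild formulation in terms of the Neumann heat semigroups $(\ure^{tD_i \Delta})_{t\ge 0}$ for $i \in \{u,v,w,z\}$. Specifically, for $T > 0$ I would define
\begin{align*}
  X_T \defs C^0([0,T]; \con0) \times C^0([0,T]; \sob 1 q) \times C^0([0,T]; \sob 1 q) \times C^0([0,T]; \con0)
\end{align*}
and consider the map $\Phi = (\Phi_u, \Phi_v, \Phi_w, \Phi_z)$ sending $(\tilde u, \tilde v, \tilde w, \tilde z) \in X_T$ to the mild solution of the four linear heat equations with right-hand sides formed from the reaction and taxis terms of \eqref{P;sigma} evaluated at $(\tilde u, \tilde v, \tilde w, \tilde z)$. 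Because $\sigma, \sigma', f$ are smooth and because $\Sigmaset$ in particular implies $|\sigma(s)| \le s$, all these source terms are locally Lipschitz in $(\tilde u, \tilde v, \tilde w, \tilde z)$ with constants depending only on the norms in $X_T$; this dependence is moreover uniform with respect to $\sigma \in \Sigmaset$.

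The technical heart is to show that, on a suitable closed ball in $X_T$ and for $T > 0$ sufficiently small, $\Phi$ maps this ball into itself and is a strict contraction. I would use standard smoothing estimates for the Neumann semigroup: for the $u$- and $z$-equations the chemotactic terms $\nabla\cdot(\tilde u\,\sigma'(\tilde u)\,\nabla \tilde v)$ and $\nabla\cdot(\tilde z\,\sigma'(\tilde z)\,\nabla \tilde w)$ are handled by the estimate $\|\ure^{t D \Delta}\nabla\cdot\phi\|_{\leb\infty} \le c\, t^{-\frac12 - \frac{n}{2q}} \|\phi\|_{\leb q}$, valid thanks to $q > n$; for the $v$- and $w$-equations the semigroup gains one derivative at a time-weighted cost $t^{-\frac12}$. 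A standard integration of these singular weights produces factors of small positive powers of $T$, yielding contraction. Uniqueness within $X_T$ follows from exactly the same Lipschitz bounds, and gluing gives a maximal existence time $\tmax$.

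Once continuous solutions are obtained, the full regularity \eqref{eq:local_ex:reg} comes from a bootstrap: since $u, z \in \con 0$ and $v, w \in \sob 1 q \hookrightarrow \con 0$, the right-hand sides of all four equations lie in $L^\infty_{\loc}$, so parabolic $L^p$-theory gives $W^{2,p}_{\loc}$ bounds interior in time, and Schauder theory then yields $C^\infty$ up to the boundary on $\Omega \times (0, \tmax)$; if additionally $(u_0,v_0,w_0,z_0) \in (\con\infty)^4$, compatibility at $t=0$ lets the iteration go down to $t = 0$. Nonnegativity of each component follows from the (scalar) parabolic minimum principle applied equation by equation: on the set $\{u = 0\}$ the $u$-equation has the nonnegative source $\beta_u$, whereas on $\{v=0\}$, $\{w=0\}$, $\{z=0\}$ the respective equations reduce to inequalities of the form $\phi_t - D\Delta\phi + c\phi \ge 0$ with $c \in L^\infty_{\loc}$; strict positivity for nontrivial data then follows from the strong parabolic maximum principle.

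The step I expect to need the most care is the extensibility criterion \eqref{eq:local_ex:ext_crit}. Here I would assume that $\tmax < \infty$ yet $\|u(\cdot,t)\|_{\leb\infty} + \|z(\cdot,t)\|_{\leb\infty}$ stays bounded on $[0,\tmax)$ and show that this already forces boundedness of $\|v(\cdot,t)\|_{\sob 1 q}$ and $\|w(\cdot,t)\|_{\sob 1 q}$; the cleanest way is to observe that the $v$- and $w$-equations are linear parabolic equations with $L^\infty$ coefficients and sources (using $|\sigma(u)|\le \|u\|_\infty$ and $|f(w)|\le\|w\|_\infty$, the latter bounded through a Grönwall argument applied to an $L^\infty$-bound for $w$), so variation-of-constants in $\sob 1 q$ with the standard $t^{-\frac12}$ semigroup estimate yields the claimed $\sob 1 q$ bound, and a symmetric variation-of-constants argument for $u$ and $z$ then shows that the whole $X_T$-norm stays controlled on $[0,\tmax)$. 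With all four components uniformly bounded in the topologies of $X_T$ near $\tmax$, the local existence argument can be restarted at times $t < \tmax$ arbitrarily close to $\tmax$ with a uniform existence time, producing an extension beyond $\tmax$ and contradicting maximality.
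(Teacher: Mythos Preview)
Your proposal is correct and follows essentially the same approach as the paper, which simply cites \cite[Lemma~3.1 and Lemma~3.2]{BellomoEtAlMathematicalTheoryKeller2015} for the fixed-point argument and extensibility criterion, \cite[Proposition~52.7]{QuittnerSoupletSuperlinearParabolicProblems2019} for the maximum principle, and \cite[Corollary~14.7]{AmannNonhomogeneousLinearQuasilinear1993} for the smoothness bootstrap. One minor inaccuracy: your parenthetical claim that the Lipschitz constants are uniform with respect to $\sigma \in \Sigmaset$ is not justified (membership in $\Sigmaset$ does not control $\sigma''$, which enters the Lipschitz estimate for $u \mapsto u\sigma'(u)$), but this is irrelevant since the lemma concerns a fixed $\sigma$.
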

\begin{proof}
  The existence of a unique local classical solution of regularity \eqref{eq:local_ex:reg} satisfying the extensibility criterion \eqref{eq:local_ex:ext_crit}
  can be shown by a fixed point argument (see for instance \cite[Lemma~3.1 and Lemma~3.2]{BellomoEtAlMathematicalTheoryKeller2015}).
  Nonnegativity (and positivity) follow from the (strict) maximum principle (cf.\ \cite[Proposition~52.7]{QuittnerSoupletSuperlinearParabolicProblems2019})
  and further smoothness properties are a consequence of, e.g., \cite[Corollary~14.7]{AmannNonhomogeneousLinearQuasilinear1993}.
\end{proof}

We henceforth fix
\begin{align}\label{eq:ass_23}
  \text{$n \in \{2, 3\}$, a smooth, bounded domain $\Omega \subset \R^n$, parameters as in \eqref{eq:intro:params}, $f$ as in \eqref{cond:f}, $A > 0$},
\end{align}
and shall obtain uniform estimates for
\begin{align}\label{eq:sol_class}
  \begin{cases}
    \text{solutions $(u, v, w, z)$ of \eqref{P;sigma} given by Lemma~\ref{lm:local_ex} with maximal existence time $\tmax$} \\
    \text{for some $\sigma \in \Sigmaset$ and $(u_0, v_0, w_0, z_0) \in \big(C^\infty(\Ombar; [0, \infty))\big)^4$ satisfying \eqref{eq:def_A}}.
  \end{cases}
\end{align}

The first yet very basic a~priori estimates for solutions of \eqref{P;sigma} are the following. 
\begin{lem}\label{lem;L1Estimate}
  Assume \eqref{eq:ass_23} and let $T \in (0, \infty)$.
  Then there is $C>0$ such that for all $(u, v, w, z)$ as in \eqref{eq:sol_class} with $\tmax \ge T$,
  \begin{align}\label{eq:L1Estimate:est}
    \sup_{t \in (0, T)} \left( \intom u +  \intom v +  \intom w + \intom z \right) \le C.
  \end{align} 
\end{lem}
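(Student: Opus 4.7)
The strategy is to integrate the four PDEs of \eqref{P;sigma} over $\Omega$, assemble the resulting ODEs into a single Gronwall-type differential inequality for a suitable linear combination of the masses $\io u$, $\io v$, $\io w$ and $\io z$, and then use \eqref{eq:def_A} to bound the initial value of that functional. A key point is that the constant $C$ must depend on the initial data only through $A$ and must be independent of $\sigma \in \Sigmaset$; both will be transparent from the argument.

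The first step is routine: testing each equation against $1$ and using $\partial_\nu u = \partial_\nu v = \partial_\nu w = \partial_\nu z = 0$ on $\partial\Omega$, every diffusive integral vanishes. The two cross-diffusive terms vanish as well because $\io \nabla\cdot(u\sigma'(u)\nabla v)$ and $\io \nabla\cdot(z\sigma'(z)\nabla w)$ reduce to boundary integrals with factors $\partial_\nu v$ and $\partial_\nu w$, respectively. This leaves
\begin{align*}
\tfrac{d}{dt}\io u &= -\gammau \io \sigma(u) v - \deltau \io u + \betau |\Omega|, \\
\tfrac{d}{dt}\io v &= \rhov \io v - \gammav \io \sigma(u) v + \muv \io w, \\
\tfrac{d}{dt}\io w &= \gammaw \io \sigma(u) v - \alphaw \io w\sigma(z) - \muw \io w, \\
\tfrac{d}{dt}\io z &= \alphaz \io f(w)\sigma(z) - \deltaz \io z.
\end{align*}

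Now I would introduce $M(t) \defs \io u + k_2 \io v + k_3 \io w + k_4 \io z$ and pick positive coefficients $k_2,k_3,k_4$ so that every nonlinear cross term has a nonpositive overall weight in $M'(t)$. Collecting coefficients and using \eqref{cond:f} in the form $f(w)\sigma(z) \le w \sigma(z)$, it suffices to arrange
\[
k_3 \gammaw \le \gammau + k_2 \gammav, \qquad k_2 \muv \le k_3 \muw, \qquad k_4 \alphaz \le k_3 \alphaw,
\]
after which
\[
M'(t) \le \betau|\Omega| - \deltau \io u - k_4 \deltaz \io z + k_2 \rhov \io v \le \betau |\Omega| + \rhov\, M(t),
\]
and Gronwall's lemma on $[0,T]$ finishes the estimate for $M(t)$. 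The genuine difficulty lies precisely in the simultaneous solvability of the three inequalities above, since the first and the second pull $k_3$ in opposite directions with respect to $k_2$. The resolution is to first take $k_2 > 0$ small enough that $k_2(\muv \gammaw/\muw - \gammav)_+ \le \gammau$, then set $k_3 \defs k_2\muv/\muw$ and $k_4 \defs k_3\alphaw/\alphaz$; one checks that this choice depends only on the parameters \eqref{eq:intro:params} and works for every $\sigma\in\Sigmaset$.

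It only remains to bound $M(0)$ by a quantity depending solely on $A$. The elementary inequality $s\ln s \ge s - 1$ for $s \ge 0$ together with \eqref{eq:def_A} gives $\io u_0 \le A + |\Omega|$ and $\io z_0 \le A + |\Omega|$, while $\io v_0 \le A|\Omega|$ and $\io w_0 \le A|\Omega|^{2/3}$ follow from the embeddings $\leb\infty, \leb3 \embed \leb1$ applied to the corresponding bounds in \eqref{eq:def_A}. Thus $M(0) \le c(A)$, and the Gronwall bound $M(t) \le (M(0) + \betau|\Omega|/\rhov)e^{\rhov T}$ yields \eqref{eq:L1Estimate:est} with a constant of the required form.
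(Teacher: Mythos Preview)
The proposal is correct and follows essentially the same approach as the paper: integrate the equations, form a positive linear combination of the masses, and apply a Gronwall/ODE comparison argument. The only cosmetic difference is the choice of coefficients---the paper uses $(1,1,\frac{\gammau+\gammav}{\gammaw},\frac{\alphaw(\gammau+\gammav)}{\alphaz\gammaw})$, which cancels the two genuinely nonlinear cross terms exactly and absorbs the remaining linear $\muv w$ coupling into the exponential growth rate, whereas you additionally force that linear term to be nonpositive by taking $k_2$ small; both variants work equally well.
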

\begin{proof}
  Integrating a suitable linear combination of the parabolic equations in \eqref{P;sigma} shows that
  \begin{align*}
    y \defs \io u + \io v+ \frac{\gammau + \gammav}{\gammaw} \io w + \frac{\alphaw (\gammau + \gammav)}{\alphaz\gammaw} \io z
  \end{align*}
  fulfils
  \begin{align*}
          \ddt y
    &\le  - \deltau \io u + \betau |\Omega|
          + \rhov \io v
          + \left( \muv - \frac{\muw (\gammau+\gammav)}{\gammaw} \right)\io w - \frac{\alphaw \deltaz (\gammau + \gammav)}{\alphaz\gammaw} \io z \\
    &\le  \max\left\{\rhov, \frac{\muv \gammaw}{\gammau + \gammav}\right\} y + \betau |\Omega|
  \end{align*}
  in $(0, \tmax)$ for all $(u, v, w, z)$ as in \eqref{eq:sol_class}.
  The statement then follows by an ODE comparison argument and the $L^1$ bounds for the initial data implicitly contained in \eqref{eq:def_A}.
\end{proof}

While for chemotaxis--consumption systems such as \eqref{prob:ct_con}, the signal is easily seen to be bounded, this is no longer the case for \eqref{P;sigma}.
However, since stronger bounds for $v$ imply stronger bounds for $w$ and vice-versa (up to a certain point at least),
the estimates for $v$ and $w$ in \eqref{eq:L1Estimate:est} can be bootstrapped in the following way.
\begin{lem}\label{lem;Linf-v}
  Assume \eqref{eq:ass_23}, let $T \in (0, \infty)$ and let $p \in [1, \frac{n}{n-2})$.
  Then there is $C>0$ such that for all $(u, v, w, z)$ as in \eqref{eq:sol_class} with $\tmax \ge T$,
  \begin{align}\label{eq:Linf-v:est}
    \sup_{t \in (0, T)} \left( \|v(\cdot, t)\|_{\leb \infty} + \|w(\cdot, t)\|_{\leb p} \right) \le C.
  \end{align}
\end{lem}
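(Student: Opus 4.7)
The plan is to close a short-time bootstrap between the $L^\infty$-norm of $v$ and a suitable $L^{p^\ast}$-norm of $w$, driven by the smoothing properties of the Neumann heat semigroup. Because the terms $-\gamma_v \sigma(u) v$ in the second and $-\alpha_w w \sigma(z)$ in the third equation of \eqref{P;sigma} are nonnegative sinks that may simply be discarded, and since $\sigma(u) \le u$, the functions $v$ and $w$ satisfy the sub-solution inequalities
\begin{align*}
  v_t - D_v \Delta v \le \rho_v v + \mu_v w,
  \qquad
  w_t - D_w \Delta w + \mu_w w \le \gamma_w u v,
\end{align*}
with homogeneous Neumann boundary data. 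Applying the comparison principle to the associated linear problems, I would rewrite these as Duhamel-type pointwise bounds
\begin{align*}
  v(t) \le \mathrm{e}^{\rho_v T} \mathrm{e}^{t D_v \Delta} v_0 + \mu_v \mathrm{e}^{\rho_v T} \int_0^t \mathrm{e}^{(t-s) D_v \Delta} w(s) \,\mathrm{d}s
\end{align*}
and analogously for $w$ with source $\gamma_w u v$ and an extra factor $\mathrm{e}^{-\mu_w(t-s)} \le 1$.

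Fix $p^\ast \in \bigl(\tfrac{n}{2}, \tfrac{n}{n-2}\bigr)$ (nonempty because $n \in \{2,3\}$, with $n/(n-2) = \infty$ when $n=2$). Taking $L^\infty$-norms in the $v$-bound, $L^{p^\ast}$-norms in the $w$-bound, and invoking the standard smoothing estimate
\begin{align*}
  \|\mathrm{e}^{\tau \Delta} g\|_{L^r(\Omega)} \le c\bigl(1 + \tau^{-\tfrac{n}{2}(\tfrac{1}{s} - \tfrac{1}{r})}\bigr) \|g\|_{L^s(\Omega)}
  \qquad (1 \le s \le r \le \infty)
\end{align*}
in tandem with $\|u(s) v(s)\|_{L^1} \le \|u(s)\|_{L^1} \|v(s)\|_{L^\infty}$ and the $L^1$-bound on $u$ from Lemma~\ref{lem;L1Estimate}, I obtain, for $0 < T \le 1$ and
$V(T) \defs \sup_{t<T} \|v(t)\|_{L^\infty}$, $W(T) \defs \sup_{t<T} \|w(t)\|_{L^{p^\ast}}$
(both finite by the smoothness asserted in Lemma~\ref{lm:local_ex} for $T < \tmax$),
\begin{align*}
  V(T) \le C_A + C_1 T^{1 - \tfrac{n}{2 p^\ast}} W(T),
  \qquad
  W(T) \le C_A + C_2 T^{1 - \tfrac{n}{2}(1 - \tfrac{1}{p^\ast})} V(T),
\end{align*}
where $C_A$, $C_1$, $C_2$ depend on $A$ (via $\|v_0\|_{L^\infty}$ and $\|w_0\|_{L^{p^\ast}} \lesssim \|w_0\|_{L^3}$) and on the fixed parameters but, crucially, not on $\sigma$ — the latter having entered only through the discarded sinks.

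Substituting the second estimate into the first yields $V(T) \le C_A(1 + C_1 T^{1 - \tfrac{n}{2p^\ast}}) + C_1 C_2 T^{(4-n)/2} V(T)$. The exponent $(4-n)/2$ is strictly positive for $n \in \{2,3\}$, so fixing $T_0 > 0$ small enough that $C_1 C_2 T_0^{(4-n)/2} \le \tfrac12$ permits an absorption step and yields $V(T_0), W(T_0) \le C^\ast$ with $C^\ast$ independent of $\sigma$. Since the resulting endpoint bounds are again of the same form as the initial data assumption on $v_0, w_0$, the argument iterates verbatim on the subintervals $[k T_0, (k+1) T_0]$, covering any prescribed $T > 0$ in finitely many steps while preserving $\sigma$-uniformity (the constant may grow geometrically in the number of iterations, but that number is bounded by $\lceil T/T_0 \rceil$). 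Standard interpolation between $L^1$ and $L^{p^\ast}$ finally upgrades the $L^{p^\ast}$-bound on $w$ to the claimed $L^p$-bound for every $p \in [1, n/(n-2))$ upon choosing $p^\ast$ close enough to $n/(n-2)$.

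The main obstacle is the circular nature of the coupling: bounding $v$ presupposes a bound on $w$ (via the production term $\mu_v w$) while bounding $w$ presupposes a bound on $v$ (since only $\|u\|_{L^1}$ is available, so $\|uv\|_{L^1}$ can be handled only after $v$ has been pulled out in $L^\infty$). Closing this loop is possible exactly because the two singular kernels $(t-s)^{-\tfrac{n}{2 p^\ast}}$ and $(t-s)^{-\tfrac{n}{2}(1-\tfrac{1}{p^\ast})}$ become simultaneously Lebesgue-integrable near $t$ precisely when $p^\ast \in (\tfrac{n}{2}, \tfrac{n}{n-2})$ — an interval that is nonempty only for $n < 4$ — and the residual positive power $T^{(4-n)/2}$ left after combining the two estimates is what makes the short-time absorption work.
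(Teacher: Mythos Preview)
Your argument is correct and reaches the same conclusion, but the closure mechanism differs from the paper's. Both proofs start from Duhamel representations and the same semigroup smoothing estimates, arriving at a coupled pair $M_v \lesssim 1 + M_w^{\,\cdot}$, $M_w \lesssim 1 + M_v$. You close this loop by extracting the residual positive time factor $T^{(4-n)/2}$, absorbing on a short interval $[0,T_0]$, and then iterating; the price is a constant that grows geometrically in $\lceil T/T_0\rceil$. The paper instead works directly on the full interval $(0,T)$ and avoids iteration altogether: it estimates $\|w\|_{L^\lambda}$ for an intermediate $\lambda\in(\tfrac n2,p)$ and interpolates $\|w\|_{L^\lambda}\le \|w\|_{L^p}^{\theta}\|w\|_{L^1}^{1-\theta}$ with $\theta=\tfrac{(\lambda-1)p}{\lambda(p-1)}\in(0,1)$, so that after substitution one obtains $M_v\le c + c\,M_v^{\theta}$, which is solved for $M_v$ by Young's inequality. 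Your route is slightly more elementary (no interpolation trick) and makes transparent why $n<4$ is needed; the paper's route yields a one-shot bound with a cleaner dependence on $T$.
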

\begin{proof}
  Since $n \le 3$, we may without loss of generality assume $p > \frac n2$.
  We set
  \begin{align*}
    M_v(T') \defs \sup_{t \in (0, T')} \|v(\cdot, t)\|_{\leb\infty}, \quad
    M_w(T') \defs \sup_{t \in (0, T')} \|w(\cdot, t)\|_{\leb p}
  \end{align*}
  for $(u, v, w, z)$ as in \eqref{eq:sol_class} with $\tmax \ge T$ and every $T' \in (0, T)$,
  fix $\lambda \in (\frac n2, p)$ and set $p' \defs \frac{p}{p-1} > \frac n2$.
  By Duhamel's formula, the maximum principle, semigroup estimates (cf.\ \cite[Lemma~1.3]{WinklerAggregationVsGlobal2010}), Hölder's inequality, \eqref{eq:def_A} and \eqref{eq:L1Estimate:est},
  there are $c_1, \dots, c_7 > 0$ such that, with $\theta = \f{(λ-1)p}{λ(p-1)}\in (0,1)$,
  \begin{align}\label{eq:Linf-v:vinfty}
          \|v(\cdot, t)\|_{\leb\infty} 
    &\le  \|\ure^{t(\Dv\Delta+\rhov)} v_0\|_{\leb \infty}
          + \muv \int_0^t \|\ure^{(t-s)(\Dv\Delta+\rhov)} w(\cdot, s)\|_{\leb \infty} \ds \notag \\
    &\le  \ure^{\rhov T}\|v_0\|_{\leb \infty} 
          + c_1 \int_0^t (1 + (t-s)^{-\frac n2 \cdot (\frac1\lambda-\frac1\infty)}) \ure^{\rhov (t-s)} \|w(\cdot, s)\|_{\leb{\lambda}} \ds \notag \\
    &\le  c_2 
          + c_2 \ure^{\rhov T} \sup_{s \in (0, T')} \|w(\cdot, s)\|_{\leb{\lambda}}  \int_0^T (1 + s^{-\frac{n}{2\lambda}}) \ds \notag \\
    &\le  c_2 
          + c_3 \sup_{s \in (0, T')} \left( \|w(\cdot, s)\|_{\leb p}^{\theta} \|w(\cdot, s)\|_{\leb1}^{1-\theta} \right)
     \le  c_2 
          + c_4 \big(M_w(T')\big)^\theta
  \end{align}
  and
  \begin{align}\label{eq:Linf-v:wp}
          \|w(\cdot, t)\|_{\leb p} 
    &\le  \|\ure^{t \Dw \Delta} w_0\|_{\leb p}
          + \gammaw \int_0^t \|\ure^{(t-s) \Dw \Delta} (uv)(\cdot, s)\|_{\leb p} \ds \notag \\
    &\le  c_5 \|w_0\|_{\leb p} 
          + c_5 \int_0^t (1 + (t-s)^{-\frac n2(\frac11 - \frac1p)}) \|(uv)(\cdot, s)\|_{\leb1} \ds \notag\\
    &\le  c_6 \|w_0\|_{\leb 3} 
          + c_6 \sup_{s \in (0, T')} \left( \|v(\cdot, s)\|_{\leb\infty} \|u(\cdot, s)\|_{\leb1} \right) \int_0^T (1 + s^{-\frac{n}{2p'}}) \ds \notag \\
    &\le  c_7
          + c_7 M_v(T')
  \end{align}
  for all $t \in (0, T')$, all $(u, v, w, z)$ as in \eqref{eq:sol_class} with $\tmax \ge T$ and all $T' \in (0, T)$.
  By Young's inequality and due to $\theta \in (0, 1)$, there is $c_8 > 0$ such that $c_4 c_7^\theta a^\theta \le c_8 + \frac{a}{2}$ for all $a \ge 0$.
  Thus, by taking the supremum over $(0, T')$ in each of the estimates \eqref{eq:Linf-v:vinfty} and \eqref{eq:Linf-v:wp}, we conclude
  \begin{align*}
        M_v(T')
    \le c_2 + c_4 \big(M_w(T')\big)^\theta
    \le c_2 + c_4c_7^\theta + c_4 c_7^\theta \big(M_v(T')\big)^\theta
    \le c_2 + c_4c_7^\theta + c_8 + \frac{M_v(T')}{2}
  \end{align*}
  and hence $M_v(T') \le c_9 \defs 2(c_2 + c_4c_7^\theta + c_8)$ as well as $M_w(T') \le c_7 + c_7c_9$
  for all $(u, v, w, z)$ as in \eqref{eq:sol_class} with $\tmax \ge T$ and all $T' \in (0, T)$,
  so that \eqref{eq:Linf-v:est} follows upon taking $T' \nea T$.
\end{proof}

Next, we consider adaptations of the energy functional discovered in \cite{DuanEtAlGlobalSolutionsCoupled2010} for chemotaxis-consumption systems to both the $(u, v)$- and the $(z, w)$-subsystem.
Mainly due to the production terms in the signal equations in \eqref{P;sigma}, however, several modifications are necessary.
\begin{lem}\label{lem;firstEn}
  Assume \eqref{eq:ass_23} and let $T \in (0, \infty)$.
  Then there is $C>0$ such that for all $(u, v, w, z)$ as in \eqref{eq:sol_class} with $\tmax \ge T$, and with $σ$ from \eqref{eq:sol_class},
  \begin{align}\label{eq:firstEn:space}
   \io u\ln u + \io \frac{|\nabla v|^2}{v} + \io w \ln w \le C
   \qquad \text{in $(0, T)$}
  \end{align}
  and 
  \begin{align}\label{eq:firstEn:spacetime}
    \iio \frac{|\nabla u|^2}{u} + \iio v |D^2 \ln v|^2 + \iio \frac{|\nabla v|^2}{v}\sigma(u) \le C.
  \end{align}
\end{lem}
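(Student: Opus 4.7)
My plan is to adapt the Duan--Lorz--Markowich quasi-energy approach for chemotaxis--consumption: augment the standard functional $\intom u\ln u + \intom \frac{|\nabla v|^2}{v}$ by an additional entropy $K \intom w\ln w$ so as to absorb the production term $+\muv w$ in the $v$-equation. Concretely, I would introduce
\begin{align*}
y(t) := a \intom u\ln u + \intom \frac{|\nabla v|^2}{v} + K \intom w\ln w,
\end{align*}
choosing $a := 2\gammav/\chiu$ and $K$ so large that $K\Dw > \muv$. The first choice makes the two cross terms $\intom \sigma'(u)\nabla u \cdot \nabla v$ (arising from the taxis term when differentiating $\intom u\ln u$ and from the consumption term when differentiating $\intom |\nabla v|^2/v$) cancel exactly, while the second lets the dissipation $-K\Dw \intom |\nabla w|^2/w$ stemming from the diffusion in the $w$-equation dominate an unwelcome counterpart that will appear below.

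The differentiation of each summand is routine. For $\ddt \intom u\ln u$, integration by parts yields the dissipative $-\Du \intom |\nabla u|^2/u$, the aforementioned cross term, and reaction integrals. For $\ddt \intom |\nabla v|^2/v$, the standard computation combined with Lemma~\ref{lem;GeneralLemma} produces the dissipative $-\Dv \intom v|D^2\ln v|^2$ modulo a bounded remainder, a Grönwall-friendly multiple of $\intom |\nabla v|^2/v$ coming from the $\rhov v$ term, the cancelling cross term together with the extra favorable $-\gammav \intom \frac{|\nabla v|^2}{v}\sigma(u)$ from the consumption, and the $\muv w$ contribution $2\muv \intom \nabla v \cdot \nabla w/v - \muv \intom |\nabla v|^2 w/v^2$, which a Cauchy--Schwarz/Young estimate bounds by $\muv \intom |\nabla w|^2/w$. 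Finally, $K\ddt \intom w\ln w$ produces the dissipative $-K\Dw \intom |\nabla w|^2/w$ that cancels this leftover, plus further reaction integrals.

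The reaction integrals are handled one by one using $\sigma(s) \le s$, the $L^\infty$ bound for $v$ and the $L^1$ bounds from Lemmas~\ref{lem;Linf-v} and \ref{lem;L1Estimate}, the elementary inequalities $s\ln s \ge -e^{-1}$, $\ln s + 1 \le s$, and $s|\ln s + 1| \le e^{-1}$ for $s \in (0, e^{-1}]$, and the Young-type bound $u \ln w \le u \ln u - u + w$ (which is the key ingredient for estimating $K\gammaw \intom \sigma(u)v(\ln w + 1)$ by $c_1 \intom u\ln u + c_2$). Summing the three identities and shifting the functional by an additive constant to make it nonnegative, I arrive at
\begin{align*}
y'(t) + a\Du \intom \frac{|\nabla u|^2}{u} + \Dv \intom v|D^2\ln v|^2 + \gammav \intom \frac{|\nabla v|^2}{v}\sigma(u) \le c_3\, y(t) + c_4
\end{align*}
on $(0,T)$. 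Grönwall's lemma together with the initial bound implied by \eqref{eq:def_A} then gives \eqref{eq:firstEn:space}, and integrating the differential inequality over $(0,T)$ yields \eqref{eq:firstEn:spacetime}.

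The main obstacle is the $\muv w$ source in the signal equation, which makes the cross term $2\muv \intom \nabla v \cdot \nabla w / v$ appear when $\intom |\nabla v|^2/v$ is differentiated and which is not controlled by any natural dissipation of the $(u,v)$-subsystem. Incorporating $K\intom w\ln w$ with $K$ large enough precisely supplies the missing $\intom |\nabla w|^2/w$ dissipation, while the additional reaction integrals thereby introduced remain tame thanks to the a~priori bounds collected in Lemmas \ref{lem;L1Estimate} and \ref{lem;Linf-v}.
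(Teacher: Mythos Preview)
Your proposal is correct and follows essentially the same strategy as the paper: the same augmented functional $a\intom u\ln u + \intom \frac{|\nabla v|^2}{v} + K\intom w\ln w$ (the paper's normalization differs only by an overall factor $\frac{\chiu}{2\gammav}$ and the harmless shifts $-u$, $-w$), the same cancellation of the $\sigma'(u)\nabla u\cdot\nabla v$ cross terms via the choice of $a$, the same use of Lemma~\ref{lem;GeneralLemma} for the boundary contribution, and the same Young estimate $2\muv\intom \frac{\nabla v\cdot\nabla w}{v}\le \muv\intom\frac{|\nabla v|^2}{v^2}w + \muv\intom\frac{|\nabla w|^2}{w}$ to reduce the $\muv w$ source to a term absorbed by the $w$-entropy dissipation.

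The one place where you genuinely deviate is the treatment of the reaction term $K\gammaw\intom \sigma(u)v\ln w$. The paper bounds $\ln w$ by a small power of $w$, applies Young's inequality to $uw^{2/(n+2)}$, and then invokes Lemma~\ref{lm:space_time_interpol} to control $\intom u^{(n+2)/n}$ by the dissipation $\intom\frac{|\nabla u|^2}{u}$, spending half of that dissipation. Your convexity inequality $u\ln w\le u\ln u - u + w$ instead feeds this term directly into the Gr\"onwall factor $c_3 y$; this is more elementary (no Gagliardo--Nirenberg needed) and leaves the full $a\Du\intom\frac{|\nabla u|^2}{u}$ intact, at the cost of a slightly larger Gr\"onwall constant. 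Both routes lead to the same conclusions \eqref{eq:firstEn:space}--\eqref{eq:firstEn:spacetime}.
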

\begin{proof}
  Straightforward testing procedures yield that
  \begin{align}\label{eq:firstEn:u}
        \ddt \io (u\ln u - u)
    &=  - \Du \io \frac{|\nabla u|^2}{u}
        + \chiu \io \sigma'(u)\nabla u \cdot \nabla v \notag \\
    &\pe -\, \gammau \io \sigma(u) v \ln u 
        - \deltau \io u \ln u + \betau \io \ln u
  \end{align}
  and
  \begin{align}\label{eq:firstEn:w}
        \ddt \io (w\ln w - w)
    &=  -\, \Dw \io \frac{|\nabla w|^2}{w}
        + \gammaw\io\sigma(u)v\ln w
        - \alphaw \io w\sigma(z)\ln w
        - \muw\io w\ln w
  \end{align}
  as well as
  \begin{align*}
          \ddt \io \frac{|\nabla v|^2}{v} 
    &=    - 2 \io \frac{\Delta v}{v} v_t + \io \frac{|\nabla v|^2}{v^2} v_t \\
    &=    - 2 \Dv \io \frac{|\Delta v|^2}{v} 
          + \Dv \io \frac{|\nabla v|^2}{v^2}\Delta v
          + 2 \gammav \io \sigma(u) \Delta v 
          - \gammav \io \frac{|\nabla v|^2}{v}\sigma(u) \\
    &\pe  - 2\rhov \io \Delta v
          + \rhov \io \frac{|\nabla v|^2}{v} 
          - 2\muv \io \frac{\Delta v \cdot w}{v}
          + \muv \io \frac{|\nabla v|^2}{v^2}w \\
    &=    - 2 \Dv \io \frac{|\Delta v|^2}{v} 
          + \Dv \io \frac{|\nabla v|^2}{v^2}\Delta v
          - 2 \gammav \io \sigma'(u) \nabla u \cdot \nabla v 
          - \gammav \io \frac{|\nabla v|^2}{v}\sigma(u) \\
    &\pe  +\, \rhov \io \frac{|\nabla v|^2}{v} 
          + 2\muv \io \frac{\nabla v\cdot \nabla w}{v} 
          - \muv \io \frac{|\nabla v|^2}{v^2}w
  \end{align*}
  hold in $(0, \tmax)$ for all $(u, v, w, z)$ as in \eqref{eq:sol_class}.
  By Young's inequality, we have 
  \[
    2\muv \io \frac{\nabla v\cdot \nabla w}{v} \le \muv \io \frac{|\nabla v|^2}{v^2} w + \muv \io \frac{|\nabla w|^2}{w},
  \]
  and due to Lemma~\ref{lem;GeneralLemma} and Lemma~\ref{lem;L1Estimate}, we can find $c_1>0$ such that 
  \[
    -2\Dv \io \frac{|\Delta v|^2}{v} + \Dv  \io \frac{|\nabla v|^2}{v^2} \Delta v \le - \Dv  \io v|D^2 \ln v|^2  + c_1
  \]
  in $(0, T)$ for all $(u, v, w, z)$ as in \eqref{eq:sol_class} with $\tmax \ge T$,
  which implies that 
  \begin{align}\label{eq:firstEn:v}
          \ddt \io \frac{|\nabla v|^2}{v}
    &\le  - \Dv  \io v|D^2 \ln v|^2  
          - 2 \gammav \io \sigma'(u) \nabla u \cdot \nabla v
          - \gammav \io \frac{|\nabla v|^2}{v}\sigma(u) \notag \\
    &\pe  +\, \rhov \io \frac{|\nabla v|^2}{v} 
          + \muv\io \frac{|\nabla w|^2}{w}
          + c_1
  \end{align}
  in $(0, T)$ for all $(u, v, w, z)$ as in \eqref{eq:sol_class} with $\tmax \ge T$.
   
  Combining \eqref{eq:firstEn:u}, \eqref{eq:firstEn:w} and \eqref{eq:firstEn:v} shows that
  \begin{align}\label{eq:firstEn:def_y}
    y \defs 
    \io (u\ln u-u) + \frac{\chiu}{2\gammav} \io \frac{|\nabla v|^2}{v} 
    + \frac{\muv \chiu}{2\Dw \gammav} \io (w\ln w-w)
  \end{align}
  fulfils
  \begin{align*}
    y'
  &\le - \Du \io \frac{|\nabla u|^2}{u} 
   -\frac{\chiu \Dv}{2\gammav} \io v|D^2 \ln v|^2 
   -\frac{\chiu}{2} \io \frac{|\nabla v|^2}{v} \sigma(u) 
   + \frac{\chiu \rhov}{2\gammav}\io \frac{|\nabla v|^2}{v}
  \notag \\ &\pe -\, \gammau \io \sigma(u) v \ln u 
   - \deltau \io u \ln u + \betau \io \ln u
  \notag\\ &\pe +\, \frac{\muv \chiu \gammaw}{2\Dw \gammav} \io\sigma(u)v\ln w
   - \frac{\alphaw \muv \chiu}{2\Dw \gammav} \io w\sigma(z)\ln w
   - \frac{\muv \muw \chiu}{2\Dw \gammav} \io w\ln w
   + \f{\chiu c_1}{2\gammav}
  \end{align*}
  in $(0, T)$ if $(u, v, w, z)$ are as in \eqref{eq:sol_class} with $\tmax \ge T$.
   
  We denote the constants in \eqref{eq:space_time_interpol:special}, \eqref{eq:L1Estimate:est} and \eqref{eq:Linf-v:est} by $c_2$, $c_3$ and $c_4$, respectively.
  Recalling \eqref{eq:Sigma}, we see that 
  \begin{align*}
    - \gammau \io \sigma(u) v \ln u + \betau \io \ln u &\le \frac{\gammau}{\ure} \io v + \betau \io u \le \frac{(\gammau + \betau \ure)c_3}{\ure}, \qquad 
    - \deltau \io u \ln u \le \frac{\deltau |\Omega|}{\ure}, \\
    - \frac{\alphaw \muv \chiu}{2\Dw \gammav} \io w \sigma(z) \ln w &\le \frac{\alphaw \muv \chiu}{2\Dw \gammav \ure} \io z \le \frac{\alphaw \muv \chiu c_3 }{2\Dw \gammav \ure}, \qquad 
    - \frac{\muv \muw \chiu}{2\Dw \gammav} \io w\ln w \le \frac{\muv \muw \chiu|\Omega|}{2\Dw \gammav\ure},
  \end{align*}
  and that there is moreover $c_5 > 0$ such that with $c_6 \defs \frac{\Du}{2} + c_3 c_4 c_5$,
  \begin{align*}
  \frac{\muv \chiu \gammaw}{2\Dw \gammav} \io \sigma(u) v \ln w 
  &\le c_4 \io uw^{\frac 2{n+2}}
  \le c_4 \left(\frac{\Du}{2c_2c_4} \io u^{\frac{n+2}n} + c_5 \io w  \right)
  \le \frac{\Du}2 \io \frac{|\nabla u|^2}{u} + c_6
  \end{align*}
  in $(0, T)$ for all $(u, v, w, z)$ as in \eqref{eq:sol_class} with $\tmax \ge T$.

  Therefore, there are $c_7,c_8 > 0$ such that the function $y$ defined in \eqref{eq:firstEn:def_y} which, due to \eqref{eq:L1Estimate:est}, is bounded from below satisfies
  \begin{align}\label{eq:firstEn:y2}
  &y'
  + \frac{\Du}{2} \io \frac{|\nabla u|^2}{u} 
  + \frac{\chiu \Dv}{2\gammav} \io v|D^2 \ln v|^2 
  + \frac{\chiu}{2} \io \frac{|\nabla v|^2}{v} \sigma(u) 
  \le c_7 + c_8 y
  \end{align}
  in $(0, T)$, whenever $(u, v, w, z)$ are as in \eqref{eq:sol_class} with $\tmax \ge T$.
  Since \eqref{eq:def_A} entails that there is $c_9 > 0$ such that $y(0) \le c_9$ for all $(u, v, w, z)$ as in \eqref{eq:sol_class},
  and again due to the $L^1$ bounds in \eqref{eq:L1Estimate:est},
  integrating \eqref{eq:firstEn:y2} yields \eqref{eq:firstEn:space} and \eqref{eq:firstEn:spacetime} for some $C > 0$.
\end{proof}

\begin{lem}\label{lem;secondEn}
  Assume \eqref{eq:ass_23} and let $T \in (0, \infty)$.
  Then there is $C>0$ such that for all $(u, v, w, z)$ as in \eqref{eq:sol_class} with $\tmax \ge T$, and with $σ$ from \eqref{eq:sol_class},
  \begin{align}\label{eq:secondEn:space}
    \io z\ln z + \io \frac{|\nabla w|^2}{w} \le C
    \qquad \text{in $(0, T)$}
  \end{align}
  and
  \begin{align}\label{eq:secondEn:spacetime}
    \iio \frac{|\nabla z|^2}{z} + \iio w|D^2 \ln w|^2 + \iio \frac{|\nabla w|^2}{w} \sigma(z) \le C.
  \end{align}
\end{lem}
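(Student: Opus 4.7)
My plan is to mimic the proof of Lemma \ref{lem;firstEn}, applied to the $(z,w)$-subsystem and with the a~priori bounds just obtained for $u$, $v$, and $w$ at my disposal. I introduce the candidate quasi-energy functional
\[
  y_2(t) \defs \io \bigl(z\ln z - z\bigr) + \frac{\chiz}{2\alphaw} \io \frac{|\nabla w|^2}{w}.
\]
The coefficient $\frac{\chiz}{2\alphaw}$ is pinned down by the requirement that the cross-diffusion contribution $+\chiz\io \sigma'(z)\nabla z\cdot\nabla w$ from $\ddt\io(z\ln z-z)$ be cancelled exactly by the $-2\alphaw\cdot\frac{\chiz}{2\alphaw}\io\sigma'(z)\nabla w\cdot\nabla z$ coming from $\ddt\io|\nabla w|^2/w$. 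In contrast to Lemma \ref{lem;firstEn}, no additional $\io w\ln w$-type summand is required, because $w$ has no self-replication term and because the $(u,v)$-quantities have already been controlled in Lemmata \ref{lem;firstEn} and \ref{lem;Linf-v}.

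For $\ddt \io (z\ln z - z)$ I test the $z$-equation with $\ln z$, and for $\ddt \io |\nabla w|^2/w$ I follow the computation performed for $\io |\nabla v|^2/v$ in Lemma \ref{lem;firstEn}, in particular invoking Lemma \ref{lem;GeneralLemma} to turn $-2\Dw\io|\Delta w|^2/w + \Dw\io|\nabla w|^2\Delta w/w^2$ into $-\Dw\io w|D^2\ln w|^2 + C\io w$. The production term $+\gammaw\sigma(u)v$ in the $w$-equation gives rise, after one integration by parts and the Young's inequality $2\io \tfrac{\nabla w\cdot\nabla(\sigma(u)v)}{w}\le \io \tfrac{|\nabla w|^2\sigma(u)v}{w^2} + \io\tfrac{|\nabla(\sigma(u)v)|^2}{\sigma(u)v}$, to a remainder bounded pointwise in time by $4\gammaw\io \tfrac{v|\nabla u|^2}{u} + 2\gammaw\io \sigma(u)\tfrac{|\nabla v|^2}{v}$; here the structural estimates $(\sigma'(s))^2/\sigma(s)\le 2/s$ and $\sigma'\le 1$ encoded in \eqref{eq:Sigma} enter. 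Lemma \ref{lem;Linf-v} provides the $L^\infty$-bound on $v$, and the spacetime bounds from Lemma \ref{lem;firstEn} ensure that this remainder belongs to $L^1(0,T)$ and can be carried along as a time-integrable nuisance.

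The decisive and only genuinely new step will be the reproduction term $\alphaz\io f(w)\sigma(z)\ln z$ appearing in $\ddt\io(z\ln z - z)$. Using $(\ln z)_+ \le z^\beta/\beta$ together with $f(w)\le w$ from \eqref{cond:f} and $\sigma(z)\le z$ from \eqref{eq:Sigma}, I obtain
\[
  \alphaz\io f(w)\sigma(z)\ln z \le \frac{\alphaz}{\beta}\io w z^{1+\beta}.
\]
Young's inequality with exponents $q\defs\frac{n+2}{2-n\beta}$ and $q'$ satisfying $(1+\beta)q'=\frac{n+2}{n}$ then separates $w$ from $z$; the $z$-contribution is absorbed into $\frac{\Dz}{2}\io|\nabla z|^2/z$ via Lemma \ref{lm:space_time_interpol} (with $\lambda=\frac12$, $p=1$, $q=2$), while the $w$-contribution $\io w^q$ is bounded on $(0,T)$ by Lemma \ref{lem;Linf-v} as soon as $q<\frac{n}{n-2}$, which, for $n\in\{2,3\}$, holds provided $\beta$ is sufficiently small (any $\beta\in(0,\frac19)$ suffices when $n=3$). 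The residual $-\deltaz\io z\ln z$ is bounded from above by $\deltaz|\Omega|/\ure$, so collecting all estimates I arrive at
\[
  y_2'(t) + \frac{\Dz}{2}\io \frac{|\nabla z|^2}{z} + \frac{\chiz\Dw}{2\alphaw}\io w|D^2\ln w|^2 + \frac{\chiz}{2}\io\frac{|\nabla w|^2\sigma(z)}{w} \le g(t)
\]
with $\int_0^T g \le C$, where $C$ depends only on the data admitted in \eqref{eq:ass_23} and on $A$. Integrating this inequality, using the initial-data bound $y_2(0)\le C(A)$ provided by \eqref{eq:def_A} together with the lower bound $y_2(t)\ge -|\Omega|/\ure - \io z(\cdot,t)$ furnished by $s\ln s\ge -1/\ure$ and Lemma \ref{lem;L1Estimate}, then yields both \eqref{eq:secondEn:space} and \eqref{eq:secondEn:spacetime}.
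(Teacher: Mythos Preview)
Your proposal is correct and follows essentially the same route as the paper: the same functional $y_2=\io(z\ln z-z)+\frac{\chiz}{2\alphaw}\io|\nabla w|^2/w$, the same use of Lemma~\ref{lem;GeneralLemma} and the structural bound $(\sigma')^2/\sigma\le 2/s$, and the same absorption of the production-induced remainder $\io v|\nabla u|^2/u+\io\sigma(u)|\nabla v|^2/v$ via the spacetime bounds from Lemma~\ref{lem;firstEn}. Your handling of the reproduction term $\alphaz\io f(w)\sigma(z)\ln z$ with a free parameter $\beta$ is the same Young-plus-Gagliardo--Nirenberg argument as in the paper, which simply fixes $\beta=\frac{1}{3n+5}$ (yielding the exponent $w^{(3n+5)/5}$) from the outset; your constraint $\beta<\frac19$ for $n=3$ recovers exactly the paper's condition $\frac{3n+5}{5}<\frac{n}{n-2}$.
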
 
\begin{proof}
  Similarly as in the proof of Lemma~\ref{lem;firstEn}, straightforward calculations yield
  \begin{align*}
        \ddt \io (z\ln z-z)
    &=  -\Dz \io \frac{|\nabla z|^2}{z}
        + \chiz \io \sigma'(z)\nabla z\cdot \nabla w
        + \alphaz \io f(w) \sigma(z)\ln z - \deltaz \io z\ln z
  \end{align*}
  and 
  \begin{align*}
        \ddt \io \frac{|\nabla w|^2}{w}
    &=  - 2 \Dw \io \frac{|\Delta w|^2}{w}
        + \Dw \io \frac{|\nabla w|^2}{w^2} \Delta w
        - \muw \io \frac{|\nabla w|^2}{w} 
    \\ &\pe
        +\, 2\gammaw \io \frac{\nabla w\cdot \nabla (\sigma (u)v)}{w} 
        - \gammaw \io \frac{|\nabla w|^2}{w} \sigma (u) v
    \\ &\pe
        -\, 2\alphaw \io \sigma'(z)\nabla z \cdot \nabla w  
        - \alphaw \io \frac{|\nabla w|^2}{w} \sigma(z) 
  \end{align*}
  in $(0, \tmax)$ for all $(u, v, w, z)$ as in \eqref{eq:sol_class}.
  Again making use of Lemma~\ref{lem;GeneralLemma}, we see that there is $c_1 > 0$ such that
  \begin{align*}
    - 2 \Dw \io \frac{|\Delta w|^2}{w} + \Dw \io \frac{|\nabla w|^2}{w^2} \Delta w \le - \Dw \io w |D^2\ln w|^2  + c_1
  \end{align*}
  in $(0, T)$ for all $(u, v, w, z)$ as in \eqref{eq:sol_class} with $\tmax \ge T$.
     
  Moreover, since $\frac{(\sigma'(s))^2}{\sigma(s)} \le \frac{2}{s}$ holds for all $s > 0$ by \eqref{eq:Sigma},
  we may further estimate
  \begin{align*}
    2\gammaw \io \frac{\nabla w \cdot \nabla (\sigma(u)v)}{w} 
    & \le 
    \gammaw \io \frac{|\nabla w|^2}{w^2} \sigma(u)v + \gammaw \io \frac{|\nabla (\sigma(u)v)|^2}{\sigma(u)v}
  \\
  & \le
   \gammaw \io \frac{|\nabla w|^2}{w^2} \sigma(u)v + 2\gammaw \io \frac{(\sigma'(u))^2v}{\sigma(u)} |\nabla u|^2 + 2\gammaw \io \frac{|\nabla v|^2}{v} \sigma(u)
  \\ & \le 
   \gammaw \io \frac{|\nabla w|^2}{w^2} \sigma(u)v + 4\gammaw \|v\|_{L^\infty(\Omega \times (0, T))} \io \frac{|\nabla u|^2}{u} + 2\gammaw \io \frac{|\nabla v|^2}{v} \sigma(u)
  \end{align*}
  in $(0, T)$ for all $(u, v, w, z)$ as in \eqref{eq:sol_class} with $\tmax \ge T$.
    
  Let $c_2 > 0$ be the constant in \eqref{eq:space_time_interpol:special}.
  Then \eqref{eq:Sigma}, \eqref{eq:space_time_interpol:special} and \eqref{eq:Linf-v:est} (noting that $\frac{3n+5}{5} < \frac{n}{n-2}$ since $n \in \{2, 3\}$) allow us to find $c_3, c_4, c_5 > 0$ such that
  \begin{align*}
         \frac{\alphaz \chiz}{2\alphaw} \io f(w)\sigma(z)\ln z 
    &\le c_3 \io wz^{\frac{3(n+2)}{3n+5}}
     \le \frac{\Dz}{2c_2} \io z^{\frac{n+2}n} + c_4 \io w^{\frac{3n+5}{5}}
     \le \frac{\Dz}{2} \io \frac{|\nabla z|^2}{z} + c_5
  \end{align*}
  in $(0, T)$ for all $(u, v, w, z)$ as in \eqref{eq:sol_class} with $\tmax \ge T$.

  Combining these estimates and recalling that $v$ is bounded in $\Omega \times (0, T)$ by \eqref{eq:Linf-v:est},
  we see that
  \begin{align*}
    y \defs \io (z\ln z-z) + \frac{\chiz}{2\alphaw} \io \frac{|\nabla w|^2}{w}
  \end{align*}
  fulfils
  \begin{align*}
    y'
  & \le 
    - \frac{\Dz}{2} \io \frac{|\nabla z|^2}{z}  
    - \frac{\chiz\Dw}{2\alphaw} \io w|D^2\ln w|^2 
    - \frac{\chiz}{2} \io \frac{|\nabla w|^2}{w} \sigma(z)
  \\ &\pe
    +\, \frac{2\chiz \gammaw c_6}{\alphaw} \io \frac{|\nabla u|^2}{u} 
    + \frac{\chiz\gammaw}{\alphaw} \io \frac{|\nabla v|^2}{v} \sigma(u)
    + \frac{\deltaz|\Omega|}{\ure}
    + \frac{\chiz c_1}{2\alphaw}
    + c_5
  \end{align*}
  in $(0, T)$, whenever $(u, v, w, z)$ is as in \eqref{eq:sol_class} with $\tmax \ge T$.
  Since $y(0)$ is bounded by \eqref{eq:sol_class} and due to \eqref{eq:firstEn:spacetime},
  an ODE comparison argument proves \eqref{eq:secondEn:space} and \eqref{eq:secondEn:spacetime} for some $C > 0$.
\end{proof}

\section{Global classical solutions in 2D}\label{sec:2d}
In two-dimensional settings, solutions to the Keller--Segel system with linear diffusion and linear taxis sensitivity are global in time whenever the chemotactically active component is bounded in $\lebl$,
see for instance \cite[Lemma~3.3]{BellomoEtAlMathematicalTheoryKeller2015}.
The aim of the present section is to show that also an analogous result holds for the more complex system \eqref{P};
that is, we shall bootstrap the a~priori estimates from Section~\ref{sec:combined}, mainly Lemma~\ref{lem;firstEn} and Lemma~\ref{lem;secondEn}, to $L^\infty$ bounds
and thereby prove Theorem~\ref{TH;2D}. 

Without commenting on this any further, we thus assume that $\Omega \subset \R^2$ is a smooth bounded domain,
and fix parameters as in \eqref{eq:intro:params} and $f$ as in \eqref{cond:f} as well as initial data as in \eqref{eq:2d_main:init}.
Since \eqref{P} coincides with \eqref{P;sigma} for $\sigma = \operatorname{id} \in \Sigmaset$,
Lemma~\ref{lm:local_ex} provides us with a (henceforth also fixed) classical solution $(u, v, w, z)$ of \eqref{P} with maximal existence time $\tmax \in (0, \infty]$.
By switching to the solution with initial data $(u, v, w, z)(\cdot, \min\{1, \frac{\tmax}{2}\})$, we may moreover assume that this solution is smooth and positive in $\Ombar \times [0, \tmax)$.
Then \eqref{eq:def_A} is fulfilled for some $A > 0$, so that we may indeed apply the results from Section~\ref{sec:combined} to this solution.

A testing procedure and Ladyzhenskaya's trick (cf.\ \cite{LadyzenskajaSolutionLargeNonstationary1959}) reveal that the a~priori estimates obtained in Lemma~\ref{lem;Linf-v} and Lemma~\ref{lem;firstEn}
imply a uniform $L^2$ bound for the first solutions component.
\begin{lem}\label{lm:u_l2}
  Let $T \in (0, \infty) \cap (0, \tmax]$.
  Then there is $C > 0$ such that
  \begin{align}\label{eq:u_l2:est}
    \io u^2 \le C
    \qquad \text{in $(0, T)$}.
  \end{align}
\end{lem}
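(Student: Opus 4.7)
The plan is to test the first equation of \eqref{P} with $u$ and to control the resulting chemotaxis contribution by means of the space-time bound for $\nabla v$ in $L^4$ that is implicit in the estimates from Section~\ref{sec:combined}. More precisely, straightforward testing and integration by parts give
\begin{align*}
  \f12 \ddt \io u^2 + D_u \io |\nabla u|^2
  = \chi_u \io u \nabla u \cdot \nabla v - \gamma_u \io u^2 v - \delta_u \io u^2 + \beta_u \io u
\end{align*}
in $(0, T)$, so that dropping the two nonpositive terms and applying Young's inequality yields
\begin{align*}
  \f12 \ddt \io u^2 + \f{D_u}{2} \io |\nabla u|^2
  \le \f{\chi_u^2}{2 D_u} \io u^2 |\nabla v|^2 + \beta_u \io u.
\end{align*}

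The key step is then to estimate $\io u^2 |\nabla v|^2$ by $\|u\|_{L^4(\Omega)}^2 \|\nabla v\|_{L^4(\Omega)}^2$ (Cauchy--Schwarz) and to exploit the two-dimensional Gagliardo--Nirenberg inequality $\|u\|_{L^4(\Omega)}^2 \le c (\|\nabla u\|_{L^2(\Omega)} \|u\|_{L^2(\Omega)} + \|u\|_{L^2(\Omega)}^2)$ combined with Young's inequality (this is Ladyzhenskaya's trick) to obtain
\begin{align*}
  \f{\chi_u^2}{2 D_u} \io u^2 |\nabla v|^2
  \le \f{D_u}{4} \io |\nabla u|^2 + c_1 \big( \|\nabla v\|_{L^4(\Omega)}^4 + 1 \big) \io u^2
\end{align*}
for some $c_1 > 0$ depending only on $\Omega$ and the parameters. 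Recalling the $L^1$ bound for $u$ from Lemma~\ref{lem;L1Estimate}, this leaves us with a differential inequality of the form
\begin{align*}
  \ddt \io u^2 \le h(t) \io u^2 + c_2
  \qquad \text{in $(0, T)$},
\end{align*}
with $h(t) \defs 2 c_1 (\|\nabla v(\cdot, t)\|_{L^4(\Omega)}^4 + 1)$.

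The main point to verify is therefore that $h \in L^1(0, T)$, that is, $\nabla v \in L^4(\Omega \times (0, T))$. This follows from the chain Lemma~\ref{lem;Linf-v} $\Rightarrow$ Lemma~\ref{lm:nabla_varphi_l14} $\Rightarrow$ Lemma~\ref{lem;firstEn}: since $v$ is bounded in $L^\infty(\Omega \times (0, T))$ by Lemma~\ref{lem;Linf-v}, inequality \eqref{eq:nabla_varphi_l14:est1} of Lemma~\ref{lm:nabla_varphi_l14} yields
\begin{align*}
  \io |\nabla v|^4 \le \|v\|_{L^\infty(\Omega \times (0, T))}^3 \io \f{|\nabla v|^4}{v^3} \le c_3 \io v |D^2 \ln v|^2
\end{align*}
in $(0, T)$, and the space-time estimate \eqref{eq:firstEn:spacetime} from Lemma~\ref{lem;firstEn} renders the right-hand side integrable in time. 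Finally, as \eqref{eq:2d_main:init} and the reduction at the beginning of this section ensure $\io u_0^2 \le C$, Gronwall's lemma transforms the differential inequality above into \eqref{eq:u_l2:est}. I expect the verification of $\nabla v \in L^4(\Omega \times (0, T))$ to be the only nontrivial point, since the rest is a standard Ladyzhenskaya-type testing argument.
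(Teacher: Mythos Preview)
Your proposal is correct and follows essentially the same approach as the paper's own proof: test with $u$, control the cross term $\io u^2|\nabla v|^2$ via the two-dimensional Gagliardo--Nirenberg (Ladyzhenskaya) inequality, and close a Gronwall argument using the space-time integrability of $|\nabla v|^4$ that comes from combining the $L^\infty$ bound on $v$ (Lemma~\ref{lem;Linf-v}) with \eqref{eq:nabla_varphi_l14:est1} and \eqref{eq:firstEn:spacetime}. The only cosmetic difference is that the paper uses the version of Gagliardo--Nirenberg with $\|u\|_{\leb1}$ as the lower-order term and works directly with $\io \frac{|\nabla v|^4}{v^3}$ in the ODI, whereas you use $\|u\|_{\leb2}$ and first pass to $\io |\nabla v|^4$; both lead to the same conclusion.
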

\begin{proof}
  By testing the first equation in \eqref{P} with $u$ and dropping some nonpositive terms, we obtain 
  \begin{align*}
        \frac12 \ddt \intom u^2
    \le - \Du \intom |\nabla u|^2
        + \chiu \intom u \nabla u \cdot \nabla v
        + \betau \intom u
    \qquad \text{in $(0, \tmax)$}.
  \end{align*}
  As in the considered two-dimensional setting the Gagliardo--Nirenberg inequality yields $c_1 > 0$ with
  \begin{align*}
    \|\varphi\|_{\leb4}^2 \le c_1 \|\nabla \varphi\|_{\leb2} \|\varphi\|_{\leb2} + c_1 \|\varphi\|_{\leb1}^2
    \qquad \text{for all $\varphi \in \sob12$},
  \end{align*}
  we may estimate
  \begin{align*}
          \intom u \nabla u \cdot \nabla v
    &\le  \frac{\Du}{2} \intom |\nabla u|^2 + \frac{1}{2\Du} \intom u^2 |\nabla v|^2
     \le  \frac{\Du}{2} \intom |\nabla u|^2 + \frac1{2\Du} \|u\|_{\leb4}^2 \|\nabla v\|_{\leb4}^2 \\
    &\le  \frac{\Du}{2} \intom |\nabla u|^2 + \frac{c_1}{2\Du} \left( \|\nabla u\|_{\leb2} \|u\|_{\leb2} + \|u\|_{\leb1}^2 \right) \|\nabla v\|_{\leb4}^2 \\
    &\le  \Du \intom |\nabla u|^2 + \frac{c_1^2}{8\Du^3} \left( \intom u^2 \right) \left( \intom |\nabla v|^4 \right) + \frac{c_1}{2\Du} \|u\|_{\leb1}^2 \|\nabla v\|_{\leb4}^4 + \frac{c_1}{2\Du}\|u\|_{\leb1}^2 \\
    &\le  \Du \intom |\nabla u|^2 + \left(\frac{c_1^2}{8\Du^3}+\frac{c_1|\Omega|}{2\Du}\right) \left( \intom u^2 \right) \left( \intom |\nabla v|^4 \right) + \frac{c_1}{2\Du}\|u\|_{\leb1}^2
    \qquad \text{in $(0, \tmax)$}.
  \end{align*}
  Since $\intom u$ and $\|v\|_{\leb\infty}$ are bounded in $(0, T)$ by Lemma~\ref{lem;L1Estimate} and Lemma~\ref{lem;Linf-v}, we conclude
  \begin{align}\label{eq:u_l2:odi}
        \ddt \intom u^2
    \le c_2 \left( \intom u^2 \right) \left( \intom \frac{|\nabla v|^4}{v^3} \right) + c_2
    \qquad \text{in $(0, T)$}
  \end{align}
  for some $c_2 > 0$.
  Setting $g(t) \defs \intom \big(\frac{|\nabla v|^4}{v^3}\big)(\cdot,t)$ for $t \in (0, T)$, we integrate \eqref{eq:u_l2:odi} to obtain
  \begin{align*}
        \intom u^2(\cdot, t)
    \le \ure^{c_2 \int_0^t g(s) \ds} \intom u_0^2 + c_2 \int_0^t \ure^{c_2 \int_s^t g(\sigma) \dsigma} \ds
    \qquad \text{for all $t \in (0, T)$},
  \end{align*}
  which implies \eqref{eq:u_l2:est} for some $C > 0$,
  since $g$ is integrable in $(0, T)$
  due to \eqref{eq:nabla_varphi_l14:est1} and \eqref{eq:firstEn:spacetime}.
\end{proof}
 
The estimate \eqref{eq:u_l2:est} renders arguments based on semigroup estimates applicable, which allow us to rapidly prove the following.
\begin{lem}\label{lm:u_linfty}
  Let $T \in (0, \infty) \cap (0, \tmax]$.
  Then there is $C > 0$ such that
  \begin{align}\label{eq:u_linfty:est}
    u \le C
    \qquad \text{in $\Omega \times(0, T)$}.
  \end{align}
\end{lem}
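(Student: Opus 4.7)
The plan is to bootstrap the $\leb 2$-estimate for $u$ from Lemma~\ref{lm:u_l2} to an $\leb\infty$-estimate via a two-step semigroup argument: first improving the regularity of $\nabla v$ to $L^\infty((0,T);\leb q)$ for arbitrarily large finite $q$, and then feeding this into Duhamel's formula for the $u$-equation.

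For the first step, I would rewrite the second equation in \eqref{P} as $v_t = \Dv\Delta v + (\rhov v - \gammav uv + \muv w)$. Because $n=2$, Lemma~\ref{lem;Linf-v} provides an $\leb\infty$-bound for $v$ and an $\leb p$-bound for $w$ for every finite $p$, which, combined with $u\in L^\infty((0,T);\leb 2)$ from Lemma~\ref{lm:u_l2}, shows that the inhomogeneity is uniformly bounded in $\leb 2$ on $(0,T)$. Applying the standard Neumann heat semigroup estimate $\|\nabla e^{tD_v\Delta}f\|_{\leb q} \le c(1+t^{-\frac12-\frac n2(\frac1p-\frac1q)})\|f\|_{\leb p}$ (cf.\ \cite{WinklerAggregationVsGlobal2010}) to Duhamel's formula for $v$, and using $\nabla v_0\in\leb q$ from \eqref{eq:2d_main:init}, yields $\nabla v\in L^\infty((0,T);\leb q)$ for every $q<\infty$, since in dimension two the exponent $\frac12+\frac n2(\frac12-\frac1q)$ is strictly less than $1$ for every such $q$.

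For the second step, Duhamel's formula for the first equation of \eqref{P} reads
\[
u(\cdot,t) = e^{t(\Du\Delta-\deltau)}u_0 - \chiu\int_0^t e^{(t-s)(\Du\Delta-\deltau)}\nabla\!\cdot\!\big(u\nabla v\big)(\cdot,s)\ds + \int_0^t e^{(t-s)(\Du\Delta-\deltau)}\big(\betau-\gammau uv\big)(\cdot,s)\ds.
\]
The initial-data and zero-order contributions are easily bounded in $\leb\infty$ on $(0,T)$ using $u_0\in\con0$, $v\in\leb\infty$, and the $\leb 2$-bound for $u$ combined with semigroup smoothing. For the chemotactic term, the estimate $\|e^{t\Delta}\nabla\!\cdot\! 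F\|_{\leb r}\le c(1+t^{-\frac12-\frac n2(\frac1p-\frac1r)})\|F\|_{\leb p}$ together with H\"older's inequality $\|u\nabla v\|_{\leb p}\le\|u\|_{\leb a}\|\nabla v\|_{\leb b}$ (with $\frac1p=\frac1a+\frac1b$) allows me to start from $a=2$ and choose $b$ large enough to obtain $u\in L^\infty((0,T);\leb r)$ for some $r>a$. Finitely many such iterations yield $u\in L^\infty((0,T);\leb r)$ for arbitrarily large $r$, and then a final application with $r=\infty$ proves \eqref{eq:u_linfty:est}.

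No serious obstacle remains once Lemma~\ref{lm:u_l2} is in hand; the argument is a routine two-dimensional Duhamel bootstrap, and its closure at every stage is guaranteed by the fact that $n=2$ makes the time singularities $t^{-\frac12-\frac n2(\frac1p-\frac1r)}$ integrable at $s=t$ whenever $\frac1p-\frac1r<\frac1n$, a condition that can always be ensured by choosing $b$, and hence $p$, sufficiently large.
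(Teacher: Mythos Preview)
Your argument is correct and follows the same two-step semigroup strategy as the paper, with only tactical differences in execution. In the first step the paper stops at $\nabla v\in L^\infty((0,T);\leb4)$, which already suffices; in the second step, instead of iterating, the paper interpolates $\|u\|_{\leb{12}}\le\|u\|_{\leb2}^{1/6}\|u\|_{\leb\infty}^{5/6}$ so that Duhamel's formula yields the self-closing inequality $M_u(T')\le c_4+c_5\big(M_u(T')\big)^{5/6}$ in a single stroke. Your finite-iteration bootstrap achieves the same conclusion and is equally standard; the paper's version is slightly shorter because it needs only $q=4$ in the first step and avoids the iteration bookkeeping. One minor point: your appeal to \eqref{eq:2d_main:init} gives $\nabla v_0\in\leb q$ only for the fixed $q>2$ chosen there, not for arbitrarily large $q$; the correct justification is the time-shift convention stated at the beginning of Section~\ref{sec:2d}, which lets one assume $v_0\in\con\infty$ and hence $\nabla v_0\in\leb\infty$ (the paper itself uses $\|\nabla v_0\|_{\leb\infty}$ in \eqref{eq:u_linfty:nabla_v_l4}).
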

\begin{proof}
  Since $g \defs \rhov v - \gammav uv + \muv w \in L^\infty((0, T); \leb2)$ by \eqref{eq:Linf-v:est} and \eqref{eq:u_l2:est},
  semigroup estimates (cf.\ \cite[Lemma~3.1]{WinklerAggregationVsGlobal2010}) yield
  \begin{align}\label{eq:u_linfty:nabla_v_l4}
          \|\nabla v(\cdot, t)\|_{\leb4}
    &\le  \|\nabla \ure^{t \Dv \Delta} v_0\|_{\leb4} 
          + \int_0^t \|\nabla \ure^{(t-s) \Dv \Delta} g(\cdot, s)\|_{\leb4} \ds \notag \\
    &\le  c_1 \|\nabla v_0\|_{\leb\infty} 
          + c_1 \|g\|_{L^\infty((0, T); \leb2)} \int_0^T (1 + s^{-\frac12 - \frac22(\frac12 - \frac14)}) \ds \notag \\
    &\le  c_2
    \qquad \text{for all $t \in (0, T)$}
  \end{align}
  for some $c_1, c_2 > 0$.
  Similarly as in the proof of Lemma~\ref{lem;Linf-v}, we set
  \begin{align*}
    M_u(T') \defs \sup_{t \in (0, T')} \|u(\cdot, t)\|_{\leb\infty}
    \qquad \text{for $T' \in (0, T)$}.
  \end{align*}
  Then the maximum principle, another application of \cite[Lemma~3.1]{WinklerAggregationVsGlobal2010}, Hölder's inequality (we note that $\frac13 = \frac{1/6}{2} + \frac{5/6}{\infty} + \frac14$)
  and \eqref{eq:u_linfty:nabla_v_l4} imply
  \begin{align*}
    &\pe  \|u(\cdot, t)\|_{\leb \infty} \\
    &\le  \|\ure^{t \Du \Delta} u_0\|_{\leb\infty} 
          + \chiu \int_0^t \|\ure^{(t-s) \Du \Delta} \nabla \cdot (u \nabla v)(\cdot, s)\|_{\leb\infty} \ds
          + \int_0^t \betau \ds \\
    &\le  \|u_0\|_{\leb\infty} 
          + c_3 \chiu \|u\nabla v\|_{L^\infty((0, T); \leb3)} \int_0^T (1 + s^{-\frac12 - \frac{2}2(\frac13 - \frac1\infty)}) \ds
          + \betau T \\
    &\le  c_4 + c_4 \|u\|_{L^\infty((0, T); \leb2)}^\frac16 \|u\|_{L^\infty((0, T'); \leb\infty)}^\frac56 \|\nabla v\|_{L^\infty((0, T); \leb4)} \\
    &\le  c_4 + c_5 \big(M_u(T')\big)^\frac56
    \qquad \text{for all $t \in (0, T')$ and all $T' \in (0, T)$}
  \end{align*}
  for some $c_3, c_4, c_5 > 0$,
  Taking the supremum of all $t \in (0, T')$ shows $M_u(T') \le c_4 + c_5 \big(M_u(T')\big)^\frac56$ for all $T' \in (0, T)$
  and hence that there is $c_6 > 0$ with $M_u(T') \le c_6$ for all $T' \in (0, T)$,
  so that \eqref{eq:u_linfty:est} follows upon taking $T' \nea T$.
\end{proof}

Since the term $+\gammaw uv$ in the third equation in \eqref{P} is bounded by \eqref{eq:Linf-v:est} and \eqref{eq:u_linfty:est},
the subsystem solved by $(z, w)$ is structurally similar to that solved by $(u, v)$,
so that similar reasonings as above yield boundedness of $z$:
\begin{lem}\label{lm:z_linfty}
  Let $T \in (0, \infty) \cap (0, \tmax]$.
  Then there is $C > 0$ such that
  \begin{align}\label{eq:z_linfty:est}
    z \le C
    \qquad \text{in $(0, T)$}.
  \end{align}
\end{lem}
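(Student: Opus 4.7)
The plan is to mirror the two-step bootstrap used for $u$ in Lemma~\ref{lm:u_l2} and Lemma~\ref{lm:u_linfty}, but first I need the additional observation that $w$ is already bounded. Indeed, the key qualitative difference between the $(u,v)$- and the $(z,w)$-subsystem is that the production term in the third equation of \eqref{P} is $\gammaw uv$, and by Lemma~\ref{lm:u_linfty} together with Lemma~\ref{lem;Linf-v} this quantity is bounded on $\Omega \times (0, T)$. Since moreover $-\alphaw wz - \muw w \le 0$ and $w_0 \in \sob1q \hookrightarrow \leb\infty$ in the considered two-dimensional setting (as $q > 2$), the comparison principle applied to $w_t - \Dw \Delta w \le \gammaw \|uv\|_{\leb\infty(\Omega \times (0, T))}$ (with homogeneous Neumann data) yields a constant $c_1$ with $\|w(\cdot, t)\|_{\leb\infty} \le c_1$ for all $t \in (0, T)$.

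With $w \in L^\infty(\Omega \times (0, T))$ at hand, I would next establish an $L^\infty((0,T); \leb2)$ bound for $z$ by essentially repeating the proof of Lemma~\ref{lm:u_l2}. Testing the fourth equation of \eqref{P} by $z$ and using $0 \le f(w) \le w \le c_1$ gives
\begin{align*}
  \tfrac12 \ddt \intom z^2 + \Dz \intom |\nabla z|^2 \le \chiz \intom z \nabla z \cdot \nabla w + \alphaz c_1 \intom z^2,
\end{align*}
and the chemotactic contribution can be handled exactly as in \eqref{eq:u_l2:odi} through Young's and Ladyzhenskaya's inequalities. This produces an ODI of the form $\ddt \intom z^2 \le c_2(1 + g(t))\intom z^2 + c_2$ with $g(t) \defs \intom |\nabla w|^4(\cdot,t)$. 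Crucially, $g \in L^1(0, T)$: by Lemma~\ref{lm:nabla_varphi_l14} and the space-time estimate \eqref{eq:secondEn:spacetime} we have $\iio \frac{|\nabla w|^4}{w^3} < \infty$, and then $w \in L^\infty$ upgrades this to $\iio |\nabla w|^4 \le c_1^3 \iio \frac{|\nabla w|^4}{w^3} < \infty$. Grönwall's lemma closes the argument.

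To pass from $z \in L^\infty((0, T); \leb2)$ to $z \in L^\infty(\Omega \times (0, T))$, I would then follow the pattern of Lemma~\ref{lm:u_linfty} line by line. The right-hand side $\gammaw uv - \alphaw wz - \muw w$ of the $w$-equation now lies in $L^\infty((0, T); \leb2)$, so semigroup estimates \`{a} la \eqref{eq:u_linfty:nabla_v_l4} give $\|\nabla w(\cdot, t)\|_{\leb4} \le c_3$ on $(0, T)$. Thereafter, writing $z$ via the variation-of-constants formula for $\Dz \Delta - \deltaz$, applying \cite[Lemma~3.1]{WinklerAggregationVsGlobal2010} to the taxis contribution, and estimating the remaining reaction term $\alphaz f(w) z$ via $f(w) \le c_1$ and $z \in L^\infty(\leb2)$, a Hölder interpolation as in the proof of Lemma~\ref{lm:u_linfty} yields $M_z(T') \le c_4 + c_4 (M_z(T'))^{5/6}$ with $M_z(T') \defs \sup_{t \in (0, T')} \|z(\cdot, t)\|_{\leb\infty}$, whence \eqref{eq:z_linfty:est} follows.

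The main conceptual obstacle was the $(z,w)$-subsystem looking structurally distinct from the $(u,v)$-subsystem due to the nonnegative reaction term $+\alphaz f(w) z$ and the nonconstant production $\gammaw uv$ in the $w$-equation. However, both difficulties dissolve once $u$ and $v$ are known to be bounded: the $w$-equation becomes comparable to a heat equation with bounded source, and the extra reaction in the $z$-equation enters the $L^2$-testing only as $\alphaz\|w\|_{\leb\infty} \intom z^2$, which is trivially absorbed by Grönwall. I therefore expect the adaptation to be routine rather than genuinely new.
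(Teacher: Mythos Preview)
Your proposal is correct and follows essentially the same route as the paper: first obtain $w\in L^\infty(\Omega\times(0,T))$ by comparison using the bound on $uv$ from Lemma~\ref{lem;Linf-v} and Lemma~\ref{lm:u_linfty}, then repeat the $L^2$-testing of Lemma~\ref{lm:u_l2} (with integrability of $\intntom |\nabla w|^4$ coming from \eqref{eq:secondEn:spacetime}, \eqref{eq:nabla_varphi_l14:est1} and the $L^\infty$ bound for $w$), and finally run the semigroup bootstrap of Lemma~\ref{lm:u_linfty} to get $\nabla w\in L^\infty((0,T);\leb4)$ and then $z\in L^\infty$. Your explicit treatment of the extra reaction term $\alphaz f(w)z$ via $f(w)\le c_1$ and $z\in L^\infty((0,T);\leb2)$ is exactly the minor adaptation needed and causes no difficulty.
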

\begin{proof}
  Thanks to the $L^\infty$ bounds in \eqref{eq:Linf-v:est} and \eqref{eq:u_linfty:est},
  $\ol w(x, t) \defs \|w_0\|_{\leb\infty} + \gammaw \|uv\|_{L^\infty(\Omega \times (0, T))} \cdot t$, $(x, t) \in \Ombar \times [0, T)$, is a bounded supersolution of the third equation in \eqref{P},
  so that $w$ is bounded in $\Omega \times (0, T)$ by the comparison principle.
  A testing procedure as in Lemma~\ref{lm:u_l2} (utilising \eqref{eq:secondEn:spacetime}) then yields $z \in L^\infty((0, T); \leb2)$,
  whenceupon semigroup arguments as in Lemma~\ref{lm:u_linfty} provide $L^\infty$ estimates first for $\nabla w$ and then for $z$.
\end{proof}

Theorem~\ref{TH;2D} is now a straightforward consequence of these a~priori estimates.
\begin{proof}[Proof of Theorem~\ref{TH;2D}]
  If the solution $(u, v, w, z)$ of \eqref{P} given by Lemma~\ref{lm:local_ex} were not global in time,
  its first and fourth component would be bounded in $\Omega \times (0, \tmax)$ by \eqref{eq:u_linfty:est} and \eqref{eq:z_linfty:est}, respectively,
  which would contradict \eqref{eq:local_ex:ext_crit}.
\end{proof}

\section{Global weak solutions in 3D}\label{sec:3d}
Now turning our attention to the setting of a smooth, bounded domain $\Omega\subset ℝ^3$, we first modify the system so as to ensure global classical solvability and then, building on the estimates derived in Section~\ref{sec:combined}, construct solutions from compactness arguments.

Throughout this section, we fix $\zeta \in C^\infty(\R; [0, 1])$ with $\zeta(s) = 1$ for $s \le 1$ and $\zeta(s) = 0$ for $s \ge 2$ and then set
\begin{align*}
  \sigmae(s) = \int_0^s \zeta(\eps \tau) \dtau,
  \quad s \ge 0,
\end{align*}
for $\eps \in (0, 1)$,
whence
\begin{align}\label{eq:sigma_eps_prop}
  \sigmae(0) = 0,
  \quad
  0 \le \sigmae'
  \begin{cases}
    = 1 & \text{in $[0, \frac1\eps]$}, \\
    \in [0, 1] & \text{in $(\frac1\eps, \frac2\eps)$}, \\
    = 0 & \text{in $[\frac2\eps, \infty)$}
  \end{cases}
  \quad \text{and} \quad 
  \frac{s\sigmae'(s)}{\sigmae(s)}
  \begin{cases}
    = \sigmae'(s) = 1& \text{if $s \in [0, \frac1\eps]$}, \\
    \le \frac{\frac2\eps \cdot 1}{\frac1\eps} \le 2 & \text{if $s \in (\frac1\eps, \frac2\eps)$}, \\
    = 0 & \text{if $s \in [\frac2\eps, \infty)$},
  \end{cases}
\end{align}
so that $\sigmae \in \Sigmaset$ for all $\eps \in (0, 1)$, where $\Sigmaset$ is defined in \eqref{eq:Sigma}.

Moreover, we fix parameters as in \eqref{eq:intro:params}, $f$ as in \eqref{cond:f} and $u_0, v_0, w_0, z_0$ as in \eqref{eq:3d_main:init}
as well as families $(\une)_{\eps \in (0, 1)}$, $(\vne)_{\eps \in (0, 1)}$, $(\wne)_{\eps \in (0, 1)}$, $(\zne)_{\eps \in (0, 1)} \subset C^\infty(\Ombar)$ of positive functions
fulfilling \eqref{eq:def_A} for some ($\eps$-independent) $A > 0$ and being such that
\begin{align}\label{eq:conv_init}
  (\une, \vne, \wne, \zne) \to (u_0, v_0, w_0, z_0) &\qquad \text{in $\big(\leb1\big)^4$ as $\eps \sea 0$.}
\end{align}
(Due to \eqref{eq:3d_main:init}, the existence of such families can be seen by typical mollification arguments.)
Thanks to Lemma~\ref{lm:local_ex}, for each $\eps \in (0, 1)$, we may then further fix a smooth solution $(\ue, \ve, \we, \ze)$ of
\begin{align}\label{prob:eps}
  \begin{cases}
    \uet = \Du \Delta \ue - \chiu \nabla \cdot (\ue \sigmae'(\ue) \nabla \ve) - \gammau \sigma(\ue) \ve - \deltau \ue + \betau & \text{in $\Omega \times (0, \tmaxe)$}, \\
    \vet = \Dv \Delta \ve + \rhov \ve - \gammav \sigmae(\ue) \ve + \muv \we                                                     & \text{in $\Omega \times (0, \tmaxe)$}, \\
    \wet = \Dw \Delta \we + \gammaw \sigmae(\ue)\ve - \alphaw \we \sigmae(\ze) - \muw \we                                       & \text{in $\Omega \times (0, \tmaxe)$}, \\
    \zet = \Dz \Delta \ze - \chiz \nabla \cdot (\ze \sigmae'(\ze) \nabla \we) + \alphaz f(\we)\sigmae(\ze) - \deltaz \ze        & \text{in $\Omega \times (0, \tmaxe)$}, \\
    \partial_\nu \ue = \partial_\nu \ve = \partial_\nu \we = \partial_\nu \ze = 0                                               & \text{on $\partial \Omega \times (0, \tmaxe)$}, \\
    (\ue, \ve, \we, \ze)(\cdot, 0) = (\une, \vne, \wne, \zne)                                                                   & \text{in $\Omega$}, \\
  \end{cases}
\end{align}
with maximal existence time $\tmaxe \in (0, \infty]$.
We emphasize that since $\sigmae \in \Sigmaset$ and as all initial data fulfil \eqref{eq:def_A} with a uniform constant $A$,
the results from Section~\ref{sec:combined} imply $\eps$-independent estimates for the solutions of \eqref{prob:eps}.

Due to the regularization introduced by the compactly supported functions $\sigmae'$, these solutions are rapidly seen to be global in time.
\begin{lem}\label{lm:tmax_eps_inf}
  For all $\eps \in (0, 1)$, we have $\tmaxe = \infty$. 
\end{lem}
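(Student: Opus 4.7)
The strategy is to invoke the extensibility criterion \eqref{eq:local_ex:ext_crit}: if $\|\ue(\cdot,t)\|_{\leb\infty}$ and $\|\ze(\cdot,t)\|_{\leb\infty}$ stay bounded on every $(0,T)$ with $T < \tmaxe$, then $\tmaxe = \infty$ follows. Unlike in Section~\ref{sec:combined}, constants are here allowed to depend on $\eps$, which is what renders matters largely routine: \eqref{eq:sigma_eps_prop} yields $\sigmae(s) \le 2/\eps$, $s\sigmae'(s) \le 2/\eps$ and $\sigmae'(s) \le 1$ for every $s \ge 0$, so every cross-coupling in \eqref{prob:eps} runs through globally bounded coefficients and the chemotactic sensitivities $\ue\sigmae'(\ue)$, $\ze\sigmae'(\ze)$ are uniformly bounded.

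First I would bootstrap the (even $\eps$-independent) $L^\infty$-bound for $\ve$ from Lemma~\ref{lem;Linf-v} to an $L^\infty$-bound for $\we$: dropping the nonpositive reactions in the $\we$-equation and using $\sigmae(\ue) \le 2/\eps$, the spatially constant function $\ol w(t) \defs \|\wne\|_{\leb\infty} + (2\gammaw/\eps)\|\ve\|_{L^\infty(\Omega\times(0,T))}\,t$ is a supersolution, so the comparison principle gives $\we \le \ol w$ on $\Omega\times(0,T)$. With $\ve,\we\in L^\infty(\Omega\times(0,T))$ in hand, the right-hand sides of the $\ve$- and $\we$-equations are bounded in $L^\infty$, whence standard smoothing estimates for the Neumann heat semigroup deliver $\nabla\ve,\nabla\we \in L^\infty((0,T);\leb q)$ for every $q\in[1,\infty)$.

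At this point the $\ue$- and $\ze$-equations in \eqref{prob:eps} are semilinear chemotaxis equations with globally bounded taxis coefficients and bounded reactions, driven by gradients lying in $L^\infty(L^q)$ for arbitrarily large $q$. Testing with $\ue^{p-1}$ (respectively $\ze^{p-1}$), rewriting the cross term as $\chiu(p-1)\intom \ue^{p-2}\cdot\ue\sigmae'(\ue)\,\nabla\ue\cdot\nabla\ve$ and absorbing it into the diffusive dissipation via Young's and Hölder's inequalities (exploiting $\ue\sigmae'(\ue)\le 2/\eps$) yields $L^p$-bounds for every $p<\infty$; a final semigroup step in the spirit of Lemma~\ref{lm:u_linfty} then upgrades these to $L^\infty$-bounds on $\Omega\times(0,T)$, and \eqref{eq:local_ex:ext_crit} forbids $\tmaxe<\infty$.

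I do not anticipate a substantive obstacle, because the truncation $\sigmae$ was introduced precisely to remove the structural difficulties of \eqref{P}; for each fixed $\eps>0$, the system \eqref{prob:eps} is semilinear with Lipschitz-type nonlinearities to which the above elementary boundedness bootstrap applies. The only real care needed is to perform the steps in the correct order: $\ve$ and $\we$ in $L^\infty$ first, then their gradients in $L^\infty(L^q)$, and only then $\ue$ and $\ze$ in $L^\infty$.
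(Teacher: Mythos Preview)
Your argument is correct, but it is considerably more elaborate than necessary. The paper exploits a property of $\sigmae$ that you mention only in passing: since $\sigmae'=0$ on $[\tfrac2\eps,\infty)$, any spatially constant function exceeding $\tfrac2\eps$ annihilates the taxis term in the first and fourth equations of \eqref{prob:eps}. Hence $\ol{\ue}(t)\defs\max\{\|\une\|_{\leb\infty},\tfrac2\eps\}+\betau t$ and, once $\we$ is bounded, $\ol{\ze}(t)\defs\max\{\|\zne\|_{\leb\infty},\tfrac2\eps\}+\tfrac{2\alphaz}{\eps}\|\we\|_{L^\infty(\Omega\times(0,T))}\,t$ are already supersolutions, and the comparison principle yields the required $L^\infty$-bounds on $\ue$ and $\ze$ in one stroke. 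This bypasses your entire gradient bootstrap (semigroup estimates on $\nabla\ve$, $\nabla\we$), the $L^p$-testing, and the final Moser-type step. Your route has the virtue of being robust---it would still work if $\sigmae'$ were merely bounded rather than compactly supported---but within the present setting the paper's three-line comparison argument is the natural choice and reflects precisely why the approximation $\sigmae$ was designed as it was.
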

\begin{proof}
  Suppose $\tmaxe < \infty$ for some $\eps \in (0, 1)$.
  As $\sigmae' = 0$ in $[\frac2\eps, \infty)$,
  since $\sigmae \le \frac2\eps$ in $[0, \infty)$ by \eqref{eq:sigma_eps_prop}
  and as $\ve$ is bounded by Lemma~\ref{lem;Linf-v},
  the functions $\ol{\ue}$, $\ol{\we}$ and $\ol{\ze}$ defined by
  \begin{align*}
    \ol{\ue}(x, t) &\defs \max\left\{\|\une\|_{\leb\infty}, \frac2\eps\right\} + \betau \cdot t, \\[0.3em]
    \ol{\we}(x, t) &\defs \|\wne\|_{\leb\infty} \exp \left( \frac{2\gammaw \|\ve\|_{L^\infty(\Omega \times (0, \tmaxe))}}{\eps} \cdot t \right), \\[0.3em]
    \ol{\ze}(x, t) &\defs \max\left\{\|\zne\|_{\leb\infty}, \frac2\eps\right\} + \frac{2\alphaz \|\ol{\we}\|_{L^\infty(\Omega \times (0, \tmaxe))}}{\eps} \cdot t
  \end{align*}
  for $(x, t) \in \Ombar \times [0, \tmaxe)$ are bounded supersolutions of the first, third and fourth equation in \eqref{prob:eps}, respectively.
  By the comparison principle (and since nonnegativity is already proven in Lemma~\ref{lm:local_ex}),
  $\ue$ and $\ze$ would then also be bounded in $\Omega \times (0, \tmaxe)$, contradicting \eqref{eq:local_ex:ext_crit}.
\end{proof}

Next, we note that the a~priori estimates from Section~\ref{sec:combined} and Lemma~\ref{lm:space_time_interpol} imply the following space-time integrability properties.
\begin{lem}\label{lm:spacetime_est}
  Let $T > 0$ and $\eta \in (0, \frac54)$.
  Then there is $C > 0$ such that 
  {\allowdisplaybreaks\begin{align}\label{eq:spacetime_est:u}
     &\intntom \ue^\frac53 + \intntom |\nabla \ue|^\frac54 \le C, \\
     &\|\ve\|_{L^\infty(\Omega \times (0, T))} + \intntom |\nabla \ve|^4 \le C, \label{eq:spacetime_est:v}\\
     &\intntom \we^{5-\eta} + \intntom |\nabla \we|^{\frac52-\eta} \le C,  \label{eq:spacetime_est:w}\\
     &\intntom \ze^\frac53 + \intntom |\nabla \ze|^\frac54 \le C \quad \text{and} \label{eq:spacetime_est:z} \\
     &\intntom (\ue \ve)^\frac53 + \intntom (\we \ze)^{\frac54-\eta} \le C \label{eq:spacetime_est:uv_wz}
  \end{align}}%
  for all $\eps \in (0, 1)$.
\end{lem}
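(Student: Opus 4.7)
The overall strategy is to combine the Gagliardo--Nirenberg-type interpolation of Lemma~\ref{lm:space_time_interpol} with the dissipative space-time integrals from Lemmas~\ref{lem;firstEn} and~\ref{lem;secondEn} together with the Hessian estimate \eqref{eq:nabla_varphi_l14:est1}. Since the estimates in Section~\ref{sec:combined} apply to any $(u,v,w,z)$ in the class \eqref{eq:sol_class}, and each $(\ue,\ve,\we,\ze)$ fits this class with a common value of the constant $A$ in \eqref{eq:def_A}, all constants produced will be $\eps$-independent.

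For \eqref{eq:spacetime_est:u} and \eqref{eq:spacetime_est:z}, I would apply the special case \eqref{eq:space_time_interpol:special} of Lemma~\ref{lm:space_time_interpol} (with $n=3$) pointwise in time to $\ue(\cdot,t)$ and $\ze(\cdot,t)$, the uniform $L^1$-hypothesis being supplied by Lemma~\ref{lem;L1Estimate}. Integration over $(0,T)$ then yields the desired estimates, since both $\iio |\na \ue|^2/\ue$ and $\iio |\na \ze|^2/\ze$ are finite by \eqref{eq:firstEn:spacetime} and \eqref{eq:secondEn:spacetime}. The $L^\infty$-bound on $\ve$ in \eqref{eq:spacetime_est:v} is immediate from Lemma~\ref{lem;Linf-v}; for the gradient part I would combine \eqref{eq:nabla_varphi_l14:est1} with the space-time bound $\iio \ve |D^2 \ln \ve|^2 \le C$ of \eqref{eq:firstEn:spacetime} to obtain $\iio |\na \ve|^4/\ve^3 \le C$, whence multiplication by $\|\ve\|_\infty^3$ finishes.

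The main difficulty lies in \eqref{eq:spacetime_est:w}: since no $L^\infty$-bound on $\we$ is available, the simple route used for $\ve$ fails. Instead, I would invoke Lemma~\ref{lm:space_time_interpol} with the precisely tuned choice $\lambda = 1/4$ and $q = 4$; this is the unique configuration for which $|\na \varphi^\lambda|^q$ coincides (up to a constant) with $|\na \varphi|^4/\varphi^3$, so that the right-hand side of \eqref{eq:space_time_interpol:est} is directly controllable via Lemma~\ref{lm:nabla_varphi_l14} and $\iio \we |D^2 \ln \we|^2 \le C$ from \eqref{eq:secondEn:spacetime}. The remaining hypothesis $\sup_{t \in (0,T)} \|\we(\cdot,t)\|_{\leb p} \le C$ is furnished by Lemma~\ref{lem;Linf-v} for every $p \in [1,3)$. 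Taking $p$ sufficiently close to $3$, the output exponents $(3+4p)/3$ and $(3+4p)/(3+p)$ approach $5$ and $5/2$ from below, respectively, which after integration in time yields $\we^{5-\eta}$ and $|\na \we|^{5/2-\eta}$ for every $\eta \in (0, 5/4)$.

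The products in \eqref{eq:spacetime_est:uv_wz} are then reduced to the preceding estimates by Hölder: $(\ue\ve)^{5/3} \le \|\ve\|_\infty^{5/3} \ue^{5/3}$ immediately gives the $(\ue,\ve)$-bound via \eqref{eq:spacetime_est:u}. For $(\we\ze)^{5/4-\eta}$ I would use the Hölder pair $(p,p')$ determined by $(5/4-\eta)p' = 5/3$, so that the $\ze$-factor is absorbed by \eqref{eq:spacetime_est:z}; a short computation gives $(5/4-\eta)p = 5(5-4\eta)/(5+12\eta) < 5$, which lies in the range covered by \eqref{eq:spacetime_est:w} for small $\eta$ and, for the remaining larger $\eta$, can be brought into that range by an additional Hölder interpolation on the finite-measure set $\Omega \times (0,T)$.
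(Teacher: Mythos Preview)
Your proposal is correct and follows essentially the same route as the paper: the special case \eqref{eq:space_time_interpol:special} for $\ue$ and $\ze$, the $L^\infty$ bound on $\ve$ together with \eqref{eq:nabla_varphi_l14:est1} for $\nabla\ve$, Lemma~\ref{lm:space_time_interpol} with $\lambda=\tfrac14$, $q=4$ and $p$ close to $3$ for $\we$, and H\"older's inequality for the products. The paper's proof is just a terser rendition of exactly these steps.
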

\begin{proof}
  According to Lemma~\ref{lem;L1Estimate} and Lemma~\ref{lem;firstEn},
  the families $(\ue)_{\eps \in (0, 1)}$ and $(\sqrt{\ue})_{\eps \in (0, 1)}$ are bounded in $L^\infty((0, T); \leb1)$ and $L^2((0, T); \sob12)$, respectively,
  so that \eqref{eq:space_time_interpol:special} yields \eqref{eq:spacetime_est:u}.
  Analogously, but relying on Lemma~\ref{lem;secondEn} instead of Lemma~\ref{lem;firstEn}, we obtain \eqref{eq:spacetime_est:z}.
  The estimate for the first term in \eqref{eq:spacetime_est:v} is contained in \eqref{eq:Linf-v:est}, whenceupon the second one then follows from \eqref{eq:firstEn:spacetime} and \eqref{eq:nabla_varphi_l14:est1}.
    
  Since $\sup_{\eps \in (0, 1)} \sup_{t \in (0, T)} \intom \we^{3-\eta}(\cdot, t) < \infty$ for all $\eta \in (0, 3)$ by \eqref{eq:Linf-v:est}
  and $\sup_{\eps \in (0, 1)} \intntom |\nabla \we^\frac14|^4 < \infty$ by \eqref{eq:secondEn:spacetime} and \eqref{eq:nabla_varphi_l14:est1},  also the estimate in \eqref{eq:spacetime_est:w} follows from Lemma~\ref{lm:space_time_interpol}, because $\displaystyle\lim_{\eta \sea 0} \frac{4(\frac34+3-\eta)}{3} = 5$ and $\displaystyle\lim_{\eta \sea 0} \frac{4(\frac34+3-\eta)}{3+3-\eta} = \frac52$. 
  Finally, \eqref{eq:spacetime_est:uv_wz} results from \eqref{eq:spacetime_est:u}--\eqref{eq:spacetime_est:z} and Hölder's inequality.
\end{proof}

As a final preparation for applications of the Aubin--Lions lemma, we obtain suitable a~priori estimates for the time derivatives.
\begin{lem}\label{lm:time_deriv}
  Let $T \in (0, \infty)$ and set $X \defs L^1((0, T); (\sob32)^\star)$.
  Then there exists $C > 0$ such that
  \begin{align*}
    \|\uet\|_X + \|\vet\|_X + \|\wet\|_X +  \|\zet\|_X \le C
    \qquad \text{for all $\eps \in (0, 1)$}.
  \end{align*}
\end{lem}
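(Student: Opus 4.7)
The plan is to test each parabolic equation in \eqref{prob:eps} against an arbitrary $\varphi \in \sob32$ with $\|\varphi\|_{\sob32}\le 1$, integrate the spatial terms by parts and then verify that the resulting time-dependent dualities satisfy a uniform bound in $L^1(0,T)$. Since $n=3$, the Sobolev embedding $\sob32 \hookrightarrow W^{1,\infty}(\Omega)$ supplies a constant $C_0>0$ with $\|\varphi\|_{\leb\infty}+\|\nabla \varphi\|_{\leb\infty} \le C_0 \|\varphi\|_{\sob32}$, and this is the only property of the test function we shall need.

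For the diffusion contributions, Lemma~\ref{lm:spacetime_est} together with H\"older's inequality (using that $\Omega \times (0,T)$ has finite measure) yields $\eps$-uniform bounds for $\|\nabla \ue\|_{L^1(\Omega\times(0,T))}$, $\|\nabla \ve\|_{L^1(\Omega\times(0,T))}$, $\|\nabla \we\|_{L^1(\Omega\times(0,T))}$ and $\|\nabla \ze\|_{L^1(\Omega\times(0,T))}$, which multiplied by $C_0$ control the corresponding integrals. The remaining zeroth-order reaction terms, namely $\sigmae(\ue)\ve \le \ue\ve$, $\we$, $\we\sigmae(\ze) \le \we\ze$, $\ue$, $f(\we)\sigmae(\ze)\le \we\ze$ and $\ze$, are all uniformly bounded in $L^1(\Omega\times (0,T))$ by combining Lemma~\ref{lem;L1Estimate} with \eqref{eq:spacetime_est:v}--\eqref{eq:spacetime_est:uv_wz}, and paired with $C_0\|\varphi\|_{\leb\infty}$ they yield the required $\eps$-uniform contributions.

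The main obstacle will be the two chemotactic terms $\chiu \intom \ue\sigmae'(\ue)\nabla \ve\cdot \nabla \varphi$ and $\chiz \intom \ze\sigmae'(\ze)\nabla \we\cdot \nabla \varphi$, since $\eps$-uniform integrability of $\ue\nabla \ve$ or $\ze\nabla \we$ is not provided by Lemma~\ref{lm:spacetime_est} alone. Here I would invoke the pointwise bound $s\sigmae'(s) \le 2\sigmae(s)$ from \eqref{eq:sigma_eps_prop} to reduce the first of these to $\sigmae(\ue)|\nabla \ve|$, then factor
\begin{align*}
  \sigmae(\ue)|\nabla \ve|=\sqrt{\sigmae(\ue)\ve}\cdot \sqrt{\sigmae(\ue)\tfrac{|\nabla \ve|^2}{\ve}}
\end{align*}
and apply Cauchy--Schwarz in space-time to obtain
\begin{align*}
  \intntom \sigmae(\ue)|\nabla \ve|
  \le \left(\intntom \ue\ve\right)^{\!1/2}\left(\intntom \sigmae(\ue)\tfrac{|\nabla \ve|^2}{\ve}\right)^{\!1/2},
\end{align*}
where the two factors on the right are $\eps$-uniformly bounded by \eqref{eq:spacetime_est:uv_wz} and by the dissipation term in \eqref{eq:firstEn:spacetime}, respectively. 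The $z$-taxis term is handled identically, now invoking \eqref{eq:spacetime_est:uv_wz} together with the corresponding dissipation integral $\intntom \sigmae(\ze)\tfrac{|\nabla \we|^2}{\we} \le C$ from \eqref{eq:secondEn:spacetime}. Summing all four groups of contributions and taking the supremum over $\|\varphi\|_{\sob32}\le 1$ will then yield the asserted $\eps$-uniform estimate, exactly as anticipated in the discussion following \eqref{eq:intro:space_time_est_2}.
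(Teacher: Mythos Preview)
Your proposal is correct and follows essentially the same route as the paper: test against $\varphi$, use the embedding $\sob32 \hookrightarrow W^{1,\infty}(\Omega)$, bound the diffusion and reaction terms via Lemma~\ref{lm:spacetime_est} and Lemma~\ref{lem;L1Estimate}, and for the taxis terms use $s\sigmae'(s)\le 2\sigmae(s)$ together with the factorisation into $\sqrt{\sigmae(\ue)\ve}$ and $\sqrt{\sigmae(\ue)|\nabla \ve|^2/\ve}$ (respectively the $(z,w)$-analogue), controlled by \eqref{eq:spacetime_est:uv_wz} and the dissipative terms in \eqref{eq:firstEn:spacetime}/\eqref{eq:secondEn:spacetime}. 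The paper merely economises by treating only the $\ze$-equation explicitly and noting that the others are no worse.
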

\begin{proof}
  Since for each term on the right-hand sides of the first three equations in \eqref{prob:eps}
  our a~priori estimates are at least as strong as for a term on the right-hand side of the fourth equation,
  it suffices to discuss the bound for $\zet$.
  By testing the fourth equation in \eqref{prob:eps} with $\varphi \in\con\infty$, integrating by parts and applying Hölder's inequality, we see that
  \begin{align*}
          \left|\intom \zet \varphi \right|
    &\le  \Dz \left|\intom \nabla \ze \cdot \nabla \varphi \right|
          + \chiz \left|\intom \ze \sigma_\eps'(\ze) \nabla \we \cdot \nabla \varphi \right|
          + \alphaz \left|\intom f(\we) \sigma_\eps(\ze) \varphi \right| 
          + \deltaz \left|\intom \ze \varphi \right|\\
    &\le  \left(
            \Dz \|\nabla \ze\|_{\leb1}
            + \chiz \|\ze \sigma_\eps'(\ze) \nabla \we\|_{\leb1}
            + \alphaz \|\we \ze\|_{\leb1}
            + \deltaz \|\ze\|_{\leb1}
          \right) \|\varphi\|_{\sob1\infty}
  \end{align*}
  holds for all $\varphi \in \con\infty$ and all $\eps \in (0, 1)$.
  The estimates in \eqref{eq:sigma_eps_prop}, \eqref{eq:secondEn:spacetime} and \eqref{eq:spacetime_est:uv_wz} entail a bound for
  $|\ze σ_{ε}'(\ze) ∇\we| = \f{\ze σ_{ε}'(\ze)}{σ_{ε}(\ze)} σ_\eps(\ze)|∇\we| \le 2 \sqrt{\we \ze}\cdot \f{\sqrt{σ_{ε}(\ze)}}{\sqrt{\we}}|∇\we|$ in $L^1(\Omega \times (0, T))$. Therefore  and due to \eqref{eq:spacetime_est:z}, \eqref{eq:spacetime_est:uv_wz} and the embedding $\sob32 \embed \sob1\infty$,
  there is $c_1 > 0$ such that
  \begin{align*}
    \intnt \left| \intom \zet \varphi \right| \le c_1 \|\varphi\|_{\sob32}
    \qquad \text{for all $\varphi \in \con\infty$ and all $\eps \in (0, 1)$},
  \end{align*}
  that is, $\|\zet\|_{X} \le c_1$ for all $\eps \in (0, 1)$.
\end{proof}

\begin{lem}\label{lm:eps_sea_0}
  There exist a null sequence $(\eps_j)_{j \in \N} \subset (0, 1)$ and functions $u, v, w, z \colon \Omega \times (0, \infty) \to [0, \infty)$ such that
  {\allowdisplaybreaks
  \begin{alignat}{2}
    (\ue, \ve, \we, \ze) &\to (u, v, w, z) &&\qquad \text{in $\big(L_{\loc}^1(\Ombarinf)\big)^4$ and a.e.\ in $\Omega \times (0, \infty)$}, \label{eq:eps_sea_0:l1_pw}\\
    (\ue, \ve, \we, \ze) &\rh (u, v, w, z) &&\qquad \text{in $\big(L_{\loc}^\frac54([0, \infty); \sob1{\frac54})\big)^4$}, \label{eq:eps_sea_0:w154} \\
    \sigmae(\ue) \ve &\to uv &&\qquad \text{in $L_{\loc}^1(\Ombarinf)$}, \label{eq:eps_sea_0:uv}\\
    f(\we) \sigmae(\ze) &\to f(w)z &&\qquad \text{in $L_{\loc}^1(\Ombarinf)$}, \label{eq:eps_sea_0:fwz}\\
    \we \sigmae(\ze) &\to wz &&\qquad \text{in $L_{\loc}^1(\Ombarinf)$}, \label{eq:eps_sea_0:wz}\\
    \frac{\sqrt{\sigmae(\ue)}}{\sqrt{\ve}}\nabla \ve &\rh \frac{\sqrt{u}}{\sqrt{v}}\nabla v &&\qquad \text{in $L_{\loc}^2(\Ombarinf)$}, \label{eq:eps_sea_0:uv_nabla_v} \\
    \frac{\sqrt{\sigmae(\ze)}}{\sqrt{\we}}\nabla \we &\rh \frac{\sqrt{z}}{\sqrt{w}}\nabla w &&\qquad \text{in $L_{\loc}^2(\Ombarinf)$}, \label{eq:eps_sea_0:wz_nabla_w}
  \end{alignat}}%
  as $\eps = \eps_j \sea 0$.
\end{lem}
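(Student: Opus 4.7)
The proof proceeds in three stages: first extract a subsequence via Aubin--Lions compactness; then promote a.e.\ convergence to $L^1_{\loc}$-convergence of the product terms via Vitali's theorem; and finally identify the weak limits of the two gradient products by a strong--weak splitting argument.

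\emph{Extraction of the subsequence.} For each fixed $T>0$, Lemma~\ref{lm:spacetime_est} bounds each of $\ue,\ve,\we,\ze$ uniformly in $L^{5/4}((0,T);\sob1{5/4})$, while Lemma~\ref{lm:time_deriv} bounds the corresponding time derivatives in $L^1((0,T);(\sob32)^\star)$. Since $\sob1{5/4}\embed\embed\leb1\embed(\sob32)^\star$, the Aubin--Lions lemma yields strong convergence in $L^{5/4}((0,T);\leb1)$ along a subsequence, and, after a further extraction, a.e.\ convergence on $\Omega\times(0,T)$. A diagonal procedure across $T\in\N$ then produces a single null sequence $(\eps_j)$ along which \eqref{eq:eps_sea_0:l1_pw} holds and, by the uniform spatial bounds, also \eqref{eq:eps_sea_0:w154}. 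The limits $u,v,w,z$ inherit nonnegativity pointwise.

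\emph{Product convergences via Vitali.} Because $\sigmae(s)\to s$ locally uniformly as $\eps\sea 0$ and because of the a.e.\ convergence above, the integrands in \eqref{eq:eps_sea_0:uv}, \eqref{eq:eps_sea_0:fwz} and \eqref{eq:eps_sea_0:wz} converge a.e.\ to their stated limits. For equi-integrability on bounded subsets of $\Ombarinf$, note that $\sigmae(s)\le s$ and, by \eqref{cond:f}, $f(\we)\le\we$, so all three sequences are pointwise dominated either by $\ue\ve$ or by $\we\ze$. These in turn are uniformly bounded in $L^{5/3}$ and in $L^{5/4-\eta}$ (for every $\eta\in(0,5/4)$) by \eqref{eq:spacetime_est:uv_wz}, so Vitali's convergence theorem delivers \eqref{eq:eps_sea_0:uv}, \eqref{eq:eps_sea_0:fwz} and \eqref{eq:eps_sea_0:wz}.

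\emph{Weak convergence of the gradient products.} For \eqref{eq:eps_sea_0:uv_nabla_v}, we rewrite $\frac{\sqrt{\sigmae(\ue)}}{\sqrt{\ve}}\nabla\ve=2\sqrt{\sigmae(\ue)}\,\nabla\sqrt{\ve}$. The pointwise identity $|\sqrt{\sigmae(\ue)}\nabla\sqrt{\ve}|^2=\tfrac14\sigmae(\ue)\tfrac{|\nabla\ve|^2}{\ve}$, combined with \eqref{eq:firstEn:spacetime}, yields a uniform $L^2(\Omega\times(0,T))$ bound, hence weak $L^2_{\loc}$-compactness. To identify the limit, the $L^{5/3}$ bound from \eqref{eq:spacetime_est:u} renders $\sigmae(\ue)$ equi-integrable in $L^p_{\loc}$ for every $p\in[1,5/3)$, so Vitali's theorem upgrades a.e.\ convergence to $\sqrt{\sigmae(\ue)}\to\sqrt{u}$ strongly in $L^2_{\loc}$; meanwhile, the $L^\infty$-bound on $\ve$ from Lemma~\ref{lem;Linf-v} together with \eqref{eq:firstEn:spacetime} shows that $\sqrt{\ve}$ is bounded in $L^2_{\loc}([0,\infty);\sob12)$, so $\nabla\sqrt{\ve}\rh\nabla\sqrt{v}$ weakly in $L^2_{\loc}$. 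Testing against $\phi\in C_c^\infty(\Ombarinf)$ and splitting
\[
  \intninfom\sqrt{\sigmae(\ue)}\,\nabla\sqrt{\ve}\cdot\phi
  = \intninfom\bigl(\sqrt{\sigmae(\ue)}-\sqrt{u}\bigr)\nabla\sqrt{\ve}\cdot\phi
  + \intninfom\sqrt{u}\,\nabla\sqrt{\ve}\cdot\phi
\]
then identifies the weak limit as $2\sqrt{u}\,\nabla\sqrt{v}=\frac{\sqrt{u}}{\sqrt{v}}\nabla v$. The analogous argument, with $(\ue,\ve)$ replaced by $(\ze,\we)$ and \eqref{eq:firstEn:spacetime} replaced by \eqref{eq:secondEn:spacetime}, yields \eqref{eq:eps_sea_0:wz_nabla_w}. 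The delicate point throughout is precisely this last step: neither factor in $\frac{\sqrt{\sigmae(\ue)}}{\sqrt{\ve}}\nabla\ve$ is a~priori individually strongly precompact in $L^2$, and the rewriting succeeds only because the higher integrability from Lemma~\ref{lm:spacetime_est} provides just enough equi-integrability beyond $L^2$ for Vitali's theorem to upgrade $\sqrt{\sigmae(\ue)}$ to strong $L^2_{\loc}$-convergence.
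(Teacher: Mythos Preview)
Your proof is correct and follows essentially the same approach as the paper's: Aubin--Lions for the $L^1_{\loc}$ and a.e.\ convergence, Vitali via the higher $L^p$ bounds of Lemma~\ref{lm:spacetime_est} for the products, and the same strong--weak splitting for the gradient terms (rewriting as $2\sqrt{\sigmae(\ue)}\,\nabla\sqrt{\ve}$ and using strong $L^2$ convergence of the first factor against weak $L^2$ convergence of the second). Two cosmetic remarks: the $L^\infty$ bound on $\ve$ is not actually needed to place $\sqrt{\ve}$ in $L^2_{\loc}([0,\infty);\sob12)$ (the $L^1$ bound from Lemma~\ref{lem;L1Estimate} already suffices, which matters for the analogous $(z,w)$ step where no $L^\infty$ bound on $\we$ is available), and the bound on $\nabla\sqrt{\ve}$ in $L^2$ is more directly read off from \eqref{eq:firstEn:space} than from \eqref{eq:firstEn:spacetime}.
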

\begin{proof}
  Since $\sob1{\frac54}$ embeds compactly into $\leb1$ which embeds into $(\sob32)^\star$,
  and due to Lemma~\ref{lm:spacetime_est} and Lemma~\ref{lm:time_deriv},
  the Aubin--Lions lemma and a diagonalization argument yield a null sequence $(\eps_j)_{j \in \N}$ such that $(\ue, \ve, \we, \ze) \to (u, v, w, z)$ in $\leb1$ as $\eps = \eps_j \sea 0$.
  By switching to a subsequence, this implies \eqref{eq:eps_sea_0:l1_pw},
  whenceupon another application of Lemma~\ref{lm:spacetime_est} gives \eqref{eq:eps_sea_0:w154}.
  Moreover, since $\ue$, $\ve$, $\we$, $\ze$ are nonnegative by Lemma~\ref{lm:local_ex}, this implies nonnegativity of the limit functions.
  Since \eqref{eq:sigma_eps_prop} and \eqref{eq:spacetime_est:uv_wz} entail equi-integrability of $(\sigmae(\ue) \ve)_{\eps \in (0, 1)}$ and $(\we \sigmae(\ze))_{\eps \in (0, 1)}$ in $\Omega \times (0, T)$ for all $T > 0$
  by the de la Vall\'ee Poussin theorem,
  Vitali's theorem and the pointwise a.e.\ convergence in \eqref{eq:eps_sea_0:l1_pw} imply \eqref{eq:eps_sea_0:uv}, \eqref{eq:eps_sea_0:fwz} and \eqref{eq:eps_sea_0:wz}.

  By \eqref{eq:secondEn:spacetime}, $\Big(2\sqrt{\sigma_{\eps_j}(z_{\eps_j})} \cdot \nabla \sqrt{w_{\eps_j}\vphantom{(z_{\eps_j})}}\,\Big)_{j \in \N}$ has a subsequence that weakly converges in $L_{\loc}^2(\Ombarinf)$. Its limit must coincide with $2\sqrt{z}∇\sqrt{w}=\f{\sqrt{z}}{\sqrt{w}}∇w$: 
  Since $\sqrt{\we} \to \sqrt w$ a.e.\ by \eqref{eq:eps_sea_0:l1_pw}, Lemma~\ref{lem;secondEn} asserts $\nabla \sqrt{\we} \rh \nabla \sqrt{w}$ in $L_{\loc}^2(\Ombarinf)$ as $\eps = \eps_j \sea 0$, and together with \eqref{eq:eps_sea_0:l1_pw}, this shows $\sqrt{\sigmae(\ze)} \cdot \nabla \sqrt{\we\vphantom{(\ze)}} \rh \sqrt{z} \cdot \nabla \sqrt{w}$ in $L_{\loc}^1(\Ombarinf)$ as $\eps = \eps_j \sea 0$. 
 Finally, \eqref{eq:eps_sea_0:uv_nabla_v} can be shown analogously.
\end{proof}

Since $(\ue, \ve, \we, \ze)$ solves \eqref{prob:eps},
the convergence statements in Lemma~\ref{lm:eps_sea_0} allow us to conclude that the limit function obtained there is a weak solution of \eqref{P} in the following sense.
\begin{df}\label{def:weak_sol}
  Let $u_0, v_0, w_0, z_0 \in \leb1$ be nonnegative.
  A quadruple $(u, v, w, z) \in \big(L_{\loc}^1(\Ombarinf; [0, \infty))\big)^4$ with $uv, wz, u \nabla v, z \nabla w \in L_{\loc}^1(\Ombarinf)$ is called a \emph{weak solution} of \eqref{P} if
  {\allowdisplaybreaks\begin{align*}
          - \intninfom u \varphi_t
          - \intom u_0 \varphi(\cdot, 0)
    &=    - \Du \intninfom \nabla u \cdot \nabla \varphi
          + \chiu \intninfom u \nabla v \cdot \nabla \varphi \\
    &\pe  -\, \gammau \intninfom u v \varphi
          - \deltau \intninfom u \varphi
          + \betau \intninfom \varphi, \\
          - \intninfom v \varphi_t
          - \intom v_0 \varphi(\cdot, 0)
    &=    - Dv \intninfom \nabla v \cdot \nabla \varphi \\
    &\pe  +\, \rhov \intninfom v \varphi
          - \gammav \intninfom uv \varphi
          + \muv \intninfom w \varphi, \\
          - \intninfom w \varphi_t
          - \intom w_0 \varphi(\cdot, 0)
    &=    - \Dw \intninfom \nabla w \cdot \nabla \varphi \\
    &\pe  +\, \gammaw \intninfom uv \varphi
          - \alphaw \intninfom w z \varphi
          - \mu_w \intninfom w \varphi, \\
          - \intninfom z \varphi_t
          - \intom z_0 \varphi(\cdot, 0)
    &=    - \Dz \intninfom \nabla z \cdot \nabla \varphi
          + \chiz \intninfom z \nabla w \cdot \nabla \varphi \\
    &\pe  +\, \alphaz \intninfom f(w) z \varphi
          - \deltaz \intninfom z \varphi
  \end{align*}}
  hold for all $\varphi \in C_c^\infty(\Ombarinf)$.
\end{df}

\begin{lem}\label{lm:uvwz_weak_sol}
  The quadruple $(u, v, w, z)$ constructed in Lemma~\ref{lm:eps_sea_0} is a weak solution of \eqref{P} in the sense of Definition~\ref{def:weak_sol}.
\end{lem}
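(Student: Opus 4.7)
The plan is to test each of the four equations in \eqref{prob:eps} against an arbitrary test function $\varphi \in C_c^\infty(\Ombarinf)$, integrate by parts in $t$ and $x$ (using the homogeneous Neumann conditions), and then pass to the limit $\eps = \eps_j \sea 0$ term by term to recover the four identities in Definition~\ref{def:weak_sol}. Since $\varphi$ has compact support in time, all integrals reduce to ones over $\Omega \times (0, T)$ for some $T > 0$, which is the regime in which the convergences of Lemma~\ref{lm:eps_sea_0} have been established.

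The linear and semilinear contributions should be routine. The initial-data terms $\intom \une \varphi(\cdot, 0), \intom \vne \varphi(\cdot, 0), \intom \wne \varphi(\cdot, 0), \intom \zne \varphi(\cdot, 0)$ converge by \eqref{eq:conv_init}; the time-derivative integrals $-\intninfom \ue \varphi_t, \dotsc, -\intninfom \ze \varphi_t$ pass to the limit through the strong $L^1_{\loc}$ convergence in \eqref{eq:eps_sea_0:l1_pw}; the diffusion terms converge by the weak $L^{5/4}_{\loc}$ convergence of the gradients from \eqref{eq:eps_sea_0:w154}; and the coupling terms $\sigmae(\ue)\ve$, $\we \sigmae(\ze)$, $f(\we)\sigmae(\ze)$ (multiplied by the bounded $\varphi$) converge by the strong $L^1_{\loc}$ statements \eqref{eq:eps_sea_0:uv}--\eqref{eq:eps_sea_0:wz}. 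This identifies every term in the four equations except for the two chemotactic fluxes $\chiu \intninfom \ue \sigmae'(\ue) \nabla \ve \cdot \nabla \varphi$ and $\chiz \intninfom \ze \sigmae'(\ze) \nabla \we \cdot \nabla \varphi$.

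The hard part, as foreshadowed in the introduction, will be the passage to the limit in these taxis terms without any $L^\infty$ control on $v$ or $w$. To treat the second one I decompose, on $\{\ze > 0\}$ (with both sides vanishing on $\{\ze = 0\}$),
\[
  \ze \sigmae'(\ze) \nabla \we
  = \underbrace{\frac{\ze \sigmae'(\ze)}{\sigmae(\ze)} \sqrt{\we \sigmae(\ze)}}_{=: A_\eps}
  \cdot \underbrace{\frac{\sqrt{\sigmae(\ze)}}{\sqrt{\we}} \nabla \we}_{=: B_\eps}.
\]
By \eqref{eq:sigma_eps_prop} we have $|A_\eps|^2 \le 4\, \we \ze$, so \eqref{eq:spacetime_est:uv_wz} yields equi-integrability of $(|A_\eps|^2)_{\eps \in (0,1)}$ on bounded subsets of $\Ombarinf$. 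Since $z$ is finite a.e., \eqref{eq:sigma_eps_prop} gives $\sigmae(\ze) \to z$ and $\tfrac{\ze \sigmae'(\ze)}{\sigmae(\ze)} \to 1$ pointwise a.e., which together with $\we \to w$ a.e.\ from \eqref{eq:eps_sea_0:l1_pw} lets Vitali's theorem produce $A_\eps \to \sqrt{wz}$ strongly in $L^2_{\loc}(\Ombarinf)$. On the other hand, \eqref{eq:eps_sea_0:wz_nabla_w} yields $B_\eps \rh \tfrac{\sqrt{z}}{\sqrt{w}} \nabla w$ weakly in $L^2_{\loc}(\Ombarinf)$, so the product converges weakly in $L^1_{\loc}(\Ombarinf)$ to $\sqrt{wz} \cdot \tfrac{\sqrt{z}}{\sqrt{w}} \nabla w = z \nabla w$. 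Testing against the bounded $\nabla \varphi$ identifies the fourth equation's taxis limit; an entirely analogous decomposition relying on \eqref{eq:eps_sea_0:uv_nabla_v} handles $\ue \sigmae'(\ue) \nabla \ve \rh u \nabla v$ in $L^1_{\loc}(\Ombarinf)$ and thus the first equation's taxis term.

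It remains to verify the integrability requirements in Definition~\ref{def:weak_sol}: $uv$ and $wz$ belong to $L^1_{\loc}(\Ombarinf)$ by Fatou's lemma applied to the bounds in \eqref{eq:spacetime_est:uv_wz}, and $u \nabla v, z \nabla w \in L^1_{\loc}(\Ombarinf)$ are precisely the weak $L^1_{\loc}$ limits obtained above. Collecting the term-by-term convergences then shows that $(u, v, w, z)$ satisfies each of the four integral identities of Definition~\ref{def:weak_sol} for every $\varphi \in C_c^\infty(\Ombarinf)$, which completes the proof of Theorem~\ref{TH;3D}.
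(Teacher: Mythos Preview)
Your proof is correct and follows essentially the same strategy as the paper: both arguments decompose the taxis flux $\ze\sigmae'(\ze)\nabla\we$ as the product of $\sqrt{\we\sigmae(\ze)}\cdot\tfrac{\ze\sigmae'(\ze)}{\sigmae(\ze)}$ (shown to converge strongly in $L^2_{\loc}$) and $\tfrac{\sqrt{\sigmae(\ze)}}{\sqrt{\we}}\nabla\we$ (converging weakly in $L^2_{\loc}$ by \eqref{eq:eps_sea_0:wz_nabla_w}), differing only in that the paper obtains the strong $L^2$ convergence from \eqref{eq:eps_sea_0:wz} together with dominated convergence, whereas you reach the same conclusion via Vitali's theorem and the equi-integrability supplied by \eqref{eq:spacetime_est:uv_wz}. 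One small inaccuracy worth noting: we \emph{do} have $L^\infty$ control on $v$ (Lemma~\ref{lem;Linf-v}), so the first taxis term is in fact easier than you suggest---but your symmetric treatment via \eqref{eq:eps_sea_0:uv_nabla_v} is equally valid.
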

\begin{proof}
  The required regularity information is implicitly contained in Lemma~\ref{lm:eps_sea_0}.
  Let $\varphi \in C_c^\infty(\Ombar)$ and let $(\eps_j)_{j \in \N}$ be as given by Lemma~\ref{lm:eps_sea_0}.
  As $\eps = \eps_j \to 0$, we then have
  \begin{align*}
    \intninfom \ze \varphi_t \to \intninfom z \varphi_t
    \quad \text{and} \quad
    \intninfom \ze \varphi \to \intninfom z \varphi
  \end{align*}
  by \eqref{eq:eps_sea_0:l1_pw},
  \begin{align*}
   \Dz \intninfom \nabla \ze \cdot \nabla \varphi \to \Dz\intninfom \nabla z \cdot \nabla \varphi
  \end{align*}
  by \eqref{eq:eps_sea_0:w154}
  \begin{align*}
   \alphaz \intninfom f(\we) \sigmae(\ze) \varphi \to \alphaz\intninfom f(w) z \varphi
  \end{align*}
  by \eqref{eq:eps_sea_0:fwz} and
  \begin{align*}
   \deltaz \intom \zne \varphi(\cdot, 0) \to \deltaz \intom z_0 \varphi(\cdot, 0)
  \end{align*}
  by \eqref{eq:conv_init}. 
  As to convergence of the taxis term,
  we first note that $(\frac{\ze \sigmae'(\ze)}{\sigmae(\ze)})_{\eps \in (0, 1)}$ is bounded by $2$ and converges a.e.\ to $1$ as $\eps = \eps_j \sea 0$ by \eqref{eq:Sigma} and \eqref{eq:eps_sea_0:l1_pw},
  so that $\sqrt{\we \sigmae(\ze)} \frac{\ze \sigmae'(\ze)}{\sigmae(\ze)} \to \sqrt{zw}$ in $L^2(\Ombarinf)$ as $\eps = \eps_j \sea 0$ thanks to \eqref{eq:eps_sea_0:wz} and Lebesgue's theorem.
  Together with \eqref{eq:eps_sea_0:wz_nabla_w}, this implies
  \begin{align*}
        \intninfom \ze \sigmae'(\ze) \nabla \we \cdot \nabla \varphi
    =   \intninfom \sqrt{\we \sigmae(\ze)} \frac{\ze \sigmae'(\ze)}{\sigmae(\ze)} \cdot \frac{\sqrt{\sigmae(\ze)}}{\sqrt{\we}} \nabla \we \cdot \nabla \varphi
    \to \intninfom z \nabla w \cdot \nabla \varphi
  \end{align*}
  as $\eps = \eps_j \sea 0$.
  In conclusion, $z$ solves the corresponding subproblem of \eqref{P} weakly.
  The terms for the remaining solution components can be treated in a similar fashion.
\end{proof}

\begin{proof}[Proof of Theorem~\ref{TH;3D}]
  All claims have been proven in Lemma~\ref{lm:uvwz_weak_sol}.
\end{proof}

\addcontentsline{toc}{section}{References}

\footnotesize

\end{document}